\newtheorem{theorem}{Theorem}[section]
\newtheorem{definition}[theorem]{Definition}
\newtheorem{lemma}[theorem]{Lemma}
\newtheorem{proposition}[theorem]{Proposition}
\newtheorem{remark}[theorem]{Remark}
\newenvironment{proof}[1][Proof]{\textbf{#1.} }{\hfill\rule{0.5em}{0.5em}}
{\catcode`\@=11\global\let\AddToReset=\@addtoreset
\AddToReset{equation}{section}

\AddToReset{theorem}{section}

\def\nc{\newcommand}

\def\div{\text{div}}

\nc\pa{\partial}

\nc\CC{\mathbb{C}}
\nc\RR{\mathbb{R}}
\nc\QQ{\mathbb{Q}}
\nc\ZZ{\mathbb{Z}}
\nc\NN{\mathbb{N}}

\begin{document}
\title{Good-$\lambda$ type bounds of quasilinear elliptic equations for the singular case}

\author{M. P. Tran \thanks{Faculty of Mathematics and Statistics, Ton Duc Thang University, Ho Chi Minh city, Vietnam; \texttt{tranminhphuong@tdtu.edu.vn}}}

\date{}  
\maketitle
\begin{abstract}
In this paper, we study the good-$\lambda$ type bounds for renormalized solutions to nonlinear elliptic problem:
\begin{align*}
\begin{cases}
-\div(A(x,\nabla u)) &= \mu \quad \text{in} \ \ \Omega, \\
u &=0 \quad \text{on} \ \ \partial \Omega.
\end{cases}
\end{align*}
where $\Omega \subset \mathbb{R}^n$, $\mu$ is a finite Radon measure and $A$ is a monotone Carath\'edory vector valued function defined on $W^{1,p}_0(\Omega)$. The operator $A$ satisfies growth and monotonicity conditions, and the $p$-capacity uniform thickness condition is imposed on $\mathbb{R}^n \setminus \Omega$, for the singular case $\frac{3n-2}{2n-1} < p \le 2- \frac{1}{n}$. In fact, the same good-$\lambda$ type estimates were also studied by Quoc-Hung Nguyen and Nguyen Cong Phuc. For instance, in \cite{55QH4,55QH5}, authors' method was also confined to the case of $\frac{3n-2}{2n-1} < p \le 2- \frac{1}{n}$ but under the assumption of $\Omega$ is the Reifenberg flat domain and the coefficients of $A$ have small BMO (bounded mean oscillation) semi-norms. Otherwise, the same problem was considered in \cite{55Ph0} in the regular case of $p>2-\frac{1}{n}$. In this paper, we extend their results, taking into account the case $\frac{3n-2}{2n-1} < p \le 2- \frac{1}{n}$ and without the hypothesis of Reifenberg flat domain on $\Omega$ and small BMO semi-norms of $A$. Moreover, in rest of this paper, we also give the proof of the boundedness property of maximal function on Lorentz spaces and also the global gradient estimates of solution.

\medskip

\medskip

\medskip

\noindent 

\medskip

\noindent Keywords: quasilinear elliptic equation; measure data; good-$\lambda$  inequality; capacity.

\end{abstract}   
                  
\section{Introduction}
\label{sec:intro}

In this paper, our goal is to obtain a ``good-$\lambda$ type bound of solutions (the renormalized solutions) to quasilinear elliptic equations with measure data:

\begin{equation}
\label{eq:elliptictype}
\begin{cases}
-\div(A(x,\nabla u)) &= \mu \quad \text{in} \ \ \Omega, \\
u &=0 \quad \text{on} \ \ \partial \Omega.
\end{cases}
\end{equation}
where the domain $\Omega$ is a bounded open subset of $\mathbb{R}^n$, $n \ge 2$, and $\mu$ is a finite signed Radon measure in $\Omega$. This kind of problem has been widely studied in \cite{11DMOP, AdP1, BMMP, BGO, BMP1, BMP2, BMMP2,GW} for the existence and uniqueness of renormalized solution. 
However, the global gradient estimates for solutions or gradient of solutions of such problem are still an open problem. Earlier, there are some literatures related to this work, on what follows, refer to \cite{Mi2,VHV}. The interior, exterior and boundary estimates on the gradient of renormalized solution to \eqref{eq:elliptictype} are also interesting needed to be studied.  In \cite{Mi2}, G. Mingione has proposed firstly the method of using the $1$-fractional maximal operator to get gradient estimates for $p>2$ and the interior case. Later, it has been developed by several authors in recent. In particular, Nguyen Cong Phuc has extended the approach this result up to the boundary case in \cite{55Ph0} for $p>2-\frac{1}{n}$, and later in \cite{55QH4, 55QH5}, authors gave a good global gradient estimates for solution to \eqref{eq:elliptictype} for $\frac{3n-2}{2n-1} < p \le 2- \frac{1}{n}$ by using the so-called Reifenberg flatness assumption on domain and in the weighted Lorentz space. Therein, the nonlinearity $A$ satisfies a smallest condition the BMO in the $x$-variable and the given results were proved the $L^{q,s}(\Omega)$ estimates of solution for all $q>0, 0<s \le \infty$. 

As far as we know, the uniform capacity density condition is weaker than the Reifenberg flatness condition. Therefore, in this paper, we formulate and establish a natural extension results concerning the global gradient estimates of solution to \eqref{eq:elliptictype} for $\frac{3n-2}{2n-1} < p \le 2- \frac{1}{n}$ under the $p$-capacity uniform thickness condition on domain $\Omega$. The nonlinearity $A$ here is a Carath\'edory vector valued function defined on $W^{1,p}_0(\Omega)$. The operator $A$ satisfies growth and monotonicity conditions: there holds
\begin{align*}
\left| A(x,\xi) \right| &\le \beta |\xi|^{p-1},\\
\langle A(x,\xi)-A(x,\eta), \xi - \eta \rangle &\ge \alpha \left( |\xi|^2 + |\eta|^2 \right)^{\frac{p-2}{2}}|\xi - \eta|^2,
\end{align*}
for every $(\xi,\eta) \in \mathbb{R}^n \times \mathbb{R}^n \setminus \{(0,0)\}$ and a.e. $x \in \mathbb{R}^n$, $\alpha$ and $\beta$ are positive constants. This operator and its properties are emphasized in Section \ref{sec:assump_opers}. In fact, there has been a research activity on the same gradient estimates in Morey and Lorentz spaces under these assumptions, using linear and nonlinear potentials to formulate the estimates. It can be found in \cite{KMi1} for the scalar case and \cite{KMi2} for the vectorial case.

In this paper, following the approaches developed by \cite{Mi3}, \cite{55Ph0}, our main effort is the proof of both results of boundedness property of maximal function and gradient estimates of solution to \eqref{eq:elliptictype} on $L^{q,s}(\Omega)$, for the singular case $\frac{3n-2}{2n-1} < p \le 2 - \frac{1}{n}$, stated in following theorems \ref{theo:lambda_estimate} and \ref{theolorentz_estimate}, respectively.

\begin{theorem}
	\label{theo:lambda_estimate}
	Let $\frac{3n-2}{2n-1}<p \le 2-\frac{1}{n}$ and suppose that $\Omega \subset \mathbb{R}^n$ is a bounded domain whose complement satisfies a $p$-capacity uniform thickness condition with constants $c_0, r_0>0$. Let $\mu \in \mathfrak{M}_b(\Omega)$ and $Q = B_{\text{diam}(\Omega)}(x_0)$, where $x_0$ is fixed in $\Omega$. \\
	Then, for  any renormalized solution $u$ to \eqref{eq:elliptictype} with given measure data $\mu$, there exist $\Theta = \Theta(n,p,\alpha,\beta,c_0)>p$, $\varepsilon_0 = \varepsilon_0(n,p,\alpha,\beta) \in (0,1)$ and constant $C = C(n,p,\alpha,\beta,\sigma,c_0,diam(\Omega)/{r_0})>0$ such that the following estimate
	\begin{align}
	\label{eq:mainlambda}
	&\left|\{({\bf M}(|\nabla u|^{\gamma_0}))^{1/\gamma_0}>\varepsilon^{-\frac{1}{\Theta}}\lambda, (\mathbf{M}_1(\mu))^{\frac{1}{p-1}}\le \varepsilon^{\frac{1}{(p-1)\gamma_0}}\lambda \}\cap Q \right| \nonumber\\
	&\qquad\leq C \varepsilon \left|\{ ({\bf M}(|\nabla u|^{\gamma_0}))^{1/\gamma_0}> \lambda\}\cap Q \right|,
	\end{align}
	holds for any $\lambda>0, \varepsilon\in (0,\varepsilon_0)$, and for some $\gamma_0 \in \left(\frac{2-p}{2},\frac{(p-1)n}{n-1} \right)$.
\end{theorem}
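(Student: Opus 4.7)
\medskip

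\noindent\textbf{Proof plan.} The approach is the Calder\'on-Zygmund-type covering argument made standard by Acerbi-Mingione and developed in~\cite{55Ph0,55QH4,55QH5}; the challenge is to carry it through in the singular range $p\le 2-1/n$ under only the $p$-capacity thickness hypothesis on $\mathbb R^n\setminus\Omega$, in place of Reifenberg flatness plus small-BMO coefficients.

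First I introduce the upper level set
\begin{equation*}
F_\lambda := \{x\in Q : ({\bf M}(|\nabla u|^{\gamma_0}))^{1/\gamma_0}(x) > \lambda\}
\end{equation*}
and apply a Vitali/Calder\'on-Zygmund-type covering lemma to extract a disjoint family $\{B_{r_i}(x_i)\}$ whose dilates cover the set $E$ appearing on the left of~\eqref{eq:mainlambda} and whose enlargements carry a prescribed small density in $F_\lambda$; this reduces~\eqref{eq:mainlambda} to a local density bound on each individual $B_{r_i}(x_i)$.

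On a fixed such ball I would freeze the equation and replace $u$ by the $A$-harmonic comparison $v$, solving $\div(A(x,\nabla v))=0$ in $B_{sr_i}(x_i)\cap\Omega$ with the same boundary values as $u$. Two ingredients then enter: (i) a boundary gradient regularity for $\nabla v$ in which the $p$-capacity thickness of $\Omega^c$ plays the role usually played by Reifenberg flatness, yielding via the stopping-time property of the cover a bound of the form $\sup_{B_{r_i}(x_i)}|\nabla v|\le C\lambda$; and (ii) a singular-case comparison estimate of Boccardo-Gall\"ouet flavor,
\begin{equation*}
\left(\frac{1}{|B_{r_i}(x_i)|}\int_{B_{r_i}(x_i)}|\nabla u-\nabla v|^{\gamma_0}\,dx\right)^{1/\gamma_0}\le C\bigl(\mathbf M_1(\mu)(x_i)\bigr)^{1/(p-1)},
\end{equation*}
whose derivation from the monotonicity weight $(|\xi|^2+|\eta|^2)^{(p-2)/2}$ and H\"older interpolation forces exactly the two-sided restriction $\tfrac{2-p}{2}<\gamma_0<\tfrac{(p-1)n}{n-1}$ on the exponent $\gamma_0$.

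Combining (i) and (ii) with the smallness hypothesis $(\mathbf M_1(\mu))^{1/(p-1)}\le\varepsilon^{1/((p-1)\gamma_0)}\lambda$ shows that the relative measure of $\{({\bf M}(|\nabla u|^{\gamma_0}))^{1/\gamma_0}>\varepsilon^{-1/\Theta}\lambda\}$ inside each $B_{r_i}(x_i)$ is at most $C\varepsilon$; summing over the disjoint balls and invoking the density calibration of the covering yields~\eqref{eq:mainlambda}. The principal obstacle is step~(i): without Reifenberg flatness or small-BMO coefficients one does not obtain full Lipschitz regularity of $\nabla v$, so the argument must be executed on the level of a higher-integrability/reverse-H\"older bound expressed through the capacity constant $c_0$, and kept consistent with the $L^{\gamma_0}$-scale of the singular regime where $\gamma_0<1$ in general---this is precisely where the self-improving exponent $\Theta = \Theta(n,p,\alpha,\beta,c_0)>p$ arises.
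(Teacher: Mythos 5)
Your overall strategy matches the paper: a Calder\'on--Zygmund--Krylov--Safonov substitution lemma (the paper's Lemma~\ref{lem:mainlem}), local comparison against an $A$-harmonic function, and a Gehring-type reverse-H\"older bound (Lemmas~\ref{111120147}, \ref{111120147*}, \ref{111120147**}) replacing the Lipschitz bound you correctly recognize is unavailable without Reifenberg flatness or small-BMO coefficients. You also correctly identify that the singular range forces the two-sided restriction on $\gamma_0$. But there is a genuine gap in your ingredient~(ii).

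In the singular range $\frac{3n-2}{2n-1}<p\le 2-\frac1n$, the local comparison estimate does \emph{not} take the one-term form
$\bigl(\fint_{B}|\nabla u-\nabla v|^{\gamma_0}\bigr)^{1/\gamma_0}\le C\,\bigl(\mathbf M_1(\mu)\bigr)^{1/(p-1)}$
that you write; that is the form valid for $p>2-\frac1n$. What the monotonicity weight $(|\xi|^2+|\eta|^2)^{(p-2)/2}$ and H\"older interpolation actually give (Lemmas~\ref{lem:estimateinter} and~\ref{lem:estimatebound}, after Nguyen) is
\begin{equation*}
\left(\fint_{B_{2R}}|\nabla(u-w)|^{\gamma_0}\,dx\right)^{1/\gamma_0}\le C\left[\frac{|\mu|(B_{2R})}{R^{n-1}}\right]^{\frac{1}{p-1}}+C\,\frac{|\mu|(B_{2R})}{R^{n-1}}\left(\fint_{B_{2R}}|\nabla u|^{\gamma_0}\,dx\right)^{\frac{2-p}{\gamma_0}},
\end{equation*}
and the second, \emph{nonlinear cross-term} is precisely what distinguishes the singular regime: it couples the measure datum to the size of $\nabla u$ itself, and cannot be absorbed into $\bigl(\mathbf M_1(\mu)\bigr)^{1/(p-1)}$ as a free-standing local estimate. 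The point that makes the good-$\lambda$ machinery close in the paper is that, thanks to the stopping-time point $x_1\in B_r(x)$ with $\bigl[\mathbf M(|\nabla u|^{\gamma_0})(x_1)\bigr]^{1/\gamma_0}\le\lambda$ and the inclusion $B_{4r}(x)\subset B_{5r}(x_1)$, one controls $\fint_{B_{4r}(x)}|\nabla u|^{\gamma_0}\le C\lambda^{\gamma_0}$; together with $\frac{|\mu|(B_{4r}(x))}{r^{n-1}}\le C\bigl(\varepsilon^{1/((p-1)\gamma_0)}\lambda\bigr)^{p-1}$, the cross-term then evaluates to $C\bigl(\varepsilon^{1/((p-1)\gamma_0)}\lambda\bigr)^{p-1}\cdot\lambda^{2-p}=C\varepsilon^{1/\gamma_0}\lambda$, which is of the right order. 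Your write-up omits this term and therefore skips the very step where the smallness of $\varepsilon$ and the stopping-time bound interact; inserting the cross-term and absorbing it this way is the essential modification that extends the $p>2-\frac1n$ argument to the singular regime. A secondary omission is that the comparison construction in the paper is carried out for the approximating solutions $u_k$ (with truncated data $\mu_k$) so as to stay in $W^{1,p}_{\mathrm{loc}}$, and one passes to the limit $k\to\infty$ at the end via Proposition~\ref{prop:nablauconverge}; a direct comparison with the renormalized $u$ itself is not immediately justified.
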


One notices that $\mathfrak{M}_b(\Omega)$ in this theorem stands for the Radon measure on $\Omega$ with bounded total variation, would be introduced in Section \ref{sec:assump_meas} later, and the denotation $diam(\Omega)$ is the diameter of a set $\Omega$ defined as:
\begin{align*}
diam(\Omega) = \sup\{d(x,y) | x,y \in \Omega\},
\end{align*}  
and in what follows, operators $\mathbf{M}, \mathbf{M}_1$ are introduced later in Section \ref{sec:othersdefs}.

\begin{theorem} \label{theolorentz_estimate} 
		Let $\frac{3n-2}{2n-1}<p \le 2-\frac{1}{n}$ and suppose that $\Omega \subset \mathbb{R}^n$ is a bounded domain whose complement satisfies a $p$-capacity uniform thickness condition with constants $c_0, r_0>0$. Let $\mu \in \mathfrak{M}_b(\Omega)$ and $Q = B_{\text{diam}(\Omega)}(x_0)$, where $x_0$ is fixed in $\Omega$. Then there exists $\Theta = \Theta(n,p,\alpha,\beta,c_0)>p$ such that for any renormalized solution $u$ to \eqref{eq:elliptictype} with given measure data $\mu$, $0<q<\Theta$ and $0<s\leq \infty$  it gives   
	\begin{equation*}
	\|\nabla u\|_{L^{q,s}(\Omega)}\leq C \|[\mathbf{M}_1(\mu)]^{\frac{1}{p-1}}\|_{L^{q,s}(Q)}.
	\end{equation*} 
	Here the constant $C$ depends only  on $n,p,\alpha,\beta,q, s,c_0$ and $diam(\Omega)/r_0$.               
\end{theorem}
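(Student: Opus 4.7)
The plan is to extract Theorem \ref{theolorentz_estimate} from the good-$\lambda$ inequality \eqref{eq:mainlambda} of Theorem \ref{theo:lambda_estimate} by the standard ``multiply by $\lambda^{s-1}$ and integrate'' device in the Lorentz scale. Set $g:=({\bf M}(|\nabla u|^{\gamma_0}))^{1/\gamma_0}$ and $h:=(\mathbf{M}_1(\mu))^{1/(p-1)}$. Since $|\nabla u|^{\gamma_0}\le \mathbf{M}(|\nabla u|^{\gamma_0})$ a.e.\ (Lebesgue differentiation) we have $|\nabla u|\le g$ a.e.\ in $\Omega$, and $\nabla u$ vanishes outside $\Omega\subset Q$, so it suffices to prove $\|g\|_{L^{q,s}(Q)}\le C\|h\|_{L^{q,s}(Q)}$; the converse reduction to $\|\nabla u\|_{L^{q,s}(\Omega)}$ is immediate from this pointwise comparison.

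\textbf{Good-$\lambda$ integration.} Removing the conditioning on $h$ in \eqref{eq:mainlambda} gives
\begin{equation*}
|\{g>\varepsilon^{-1/\Theta}\lambda\}\cap Q|\le C\varepsilon\,|\{g>\lambda\}\cap Q|+|\{h>\varepsilon^{1/((p-1)\gamma_0)}\lambda\}\cap Q|.
\end{equation*}
Raising to the power $s/q$ (via the elementary $(a+b)^{s/q}\le c_{s/q}(a^{s/q}+b^{s/q})$), multiplying by $\lambda^{s-1}$, integrating over $\lambda\in(0,\infty)$, and then rescaling $\tau=\varepsilon^{-1/\Theta}\lambda$ on the left and $\tau=\varepsilon^{1/((p-1)\gamma_0)}\lambda$ on the right in the layer-cake formula $\|f\|_{L^{q,s}}^s\sim\int_0^\infty\lambda^{s-1}|\{|f|>\lambda\}|^{s/q}d\lambda$ produces an inequality of the form
\begin{equation*}
\|g\|_{L^{q,s}(Q)}^s\le C_1\varepsilon^{s/q-s/\Theta}\|g\|_{L^{q,s}(Q)}^s+C_2\varepsilon^{-s/\Theta-s/((p-1)\gamma_0)}\|h\|_{L^{q,s}(Q)}^s.
\end{equation*}
Because $q<\Theta$, the exponent $s/q-s/\Theta$ is strictly positive; choosing $\varepsilon\in(0,\varepsilon_0)$ so that $C_1\varepsilon^{s/q-s/\Theta}\le 1/2$ absorbs the first right-hand term into the left, yielding $\|g\|_{L^{q,s}(Q)}\le C\|h\|_{L^{q,s}(Q)}$. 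The degenerate case $s=\infty$ runs in parallel, using $\|f\|_{L^{q,\infty}}=\sup_\lambda\lambda|\{|f|>\lambda\}|^{1/q}$ and taking a supremum in $\lambda$ in place of integration.

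\textbf{Finiteness and truncation.} The absorption above is only legitimate once $\|g\|_{L^{q,s}(Q)}$ is a priori finite. This is arranged by the standard truncation device: run the argument with the distribution function $\lambda\mapsto\min(|\{g>\lambda\}\cap Q|,M)$, or equivalently with $\min(g,k)$ in place of $g$, so that the truncated Lorentz quasinorm is trivially finite (using $|Q|<\infty$), then pass $M\to\infty$ (or $k\to\infty$) by monotone convergence after the constants have been fixed independently of the truncation parameter. The parameter $\gamma_0\in(\frac{2-p}{2},\frac{(p-1)n}{n-1})$ is then calibrated to $q$ so that $q/\gamma_0>1$, which also allows the boundedness of $\mathbf{M}$ on $L^{q/\gamma_0,s/\gamma_0}$ to freely interchange $g$ with $|\nabla u|$ whenever convenient; the classical Boccardo--Gallou\"et type a priori bound for renormalized solutions supplies an initial $|\nabla u|^{\gamma_0}\in L^1(\Omega)$ that anchors the truncation.

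\textbf{Main obstacle.} All the genuine PDE work sits inside Theorem \ref{theo:lambda_estimate}; once that is secured, the passage to the Lorentz estimate is mechanical except for the truncation bookkeeping. The one delicate point to monitor is that the required $\gamma_0$ depends on $q$ (small $q$ forces small $\gamma_0$) and must remain inside the admissible interval $(\frac{2-p}{2},\frac{(p-1)n}{n-1})$, which is nonempty exactly when $p>\frac{3n-2}{2n-1}$ — precisely the lower bound on $p$ in the hypotheses.
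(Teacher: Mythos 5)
Your argument is correct and coincides with the paper's own proof: split $\{g>\varepsilon^{-1/\Theta}\lambda\}\cap Q$ into $E_{\lambda,\varepsilon}$ and the set where $(\mathbf{M}_1\mu)^{1/(p-1)}$ is large, integrate $\lambda^{s-1}|\cdot|^{s/q}$, rescale, and absorb using $q<\Theta$; the $s=\infty$ case is a supremum variant. Your extra remarks on truncation for the a priori finiteness of $\|g\|_{L^{q,s}(Q)}$ are sound care that the paper omits, though note that $\gamma_0$ is fixed once by Theorem~\ref{theo:lambda_estimate} and need not be calibrated to $q$ — the pointwise bound $|\nabla u|\le g$ alone closes the argument, with no recourse to boundedness of $\mathbf{M}$ on $L^{q/\gamma_0,s/\gamma_0}$.
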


For the proofs of these above theorems \ref{theo:lambda_estimate} and \ref{theolorentz_estimate} in the present paper, it is possible to apply some results developed for quasilinear equations with given measure data, or linear/nonlinear potential and Calder\'on-Zygmund theories (see in \cite{bebo, BW1, 11DMOP, 55DuzaMing, Duzamin2, Mi2, 55QH3, 55Ph0, 55Ph2}), to some new comparison estimates in the singular case $\frac{3n-2}{2n-1}<p \le 2 - \frac{1}{n}$. 

The outline of paper is organized as follows. In the next Section \ref{sec:pre} we begin with some preliminaries about our notation and assumptions more precisely. Section \ref{sec:lems} is devoted to some important lemmas of local interior and boundary comparison estimates, that are necessary for main results. In Section \ref{sec:main} we complete the proofs of the main theorems \ref{theo:lambda_estimate} and theorem \ref{theolorentz_estimate} in our framework.

\section{Preliminaries}
\label{sec:pre}

First of all, let us recall a few preliminaries about the definitions and assumptions on our problem. In this paper, $\Omega$ is a bounded, open subset of $\mathbb{R}^n$, $n \ge 2$; and there is no smoothness is assumed on $\partial\Omega$. 

\subsection{Definitions of capacities}

We begin with the definition of $p$-capacity. Let $p$ and $p'$ be real numbers, such that $1 \le p \le n$ and $p'$ the Holder conjugate exponent of $p$, i.e., $1/p+1/p'=1$. The $p$-capacity $\text{cap}_p(B,\Omega)$ for any set $B \subseteq \Omega$ with respect to $\Omega$ is defined as following.

The $p$-capacity of any compact set $K \subset \Omega$ is defined as:
\begin{align*}
\text{cap}_p(K,\Omega) = \inf \left\{ \int_\Omega{|\nabla \varphi|^p dx}: \varphi \in C_c^\infty, \varphi \ge \chi_K \right\},
\end{align*}
where $\chi_K$ is the characteristic function of $K$. The $p$-capacity of any open subset $U \subseteq \Omega$ is then defined by:
\begin{align*}
\text{cap}_p(U,\Omega) = \sup \left\{ \text{cap}_p(K,\Omega), \quad K \ \text{compact}, \quad K \subseteq U \right\}.
\end{align*}

Consequently, the $p$-capacity of any subset $B \subseteq \Omega$ is defined by:
\begin{align*}
\text{cap}_p(B,\Omega) = \inf \left\{ \text{cap}_p(U,\Omega), \quad U \ \text{open}, \quad B \subseteq U \right\}.
\end{align*}

A function $u$ defined on $\Omega$ is said to be $\text{cap}_p$-quasi continuous if for every $\varepsilon>0$ there exists $B \subseteq \Omega$ with $\text{cap}_p(B,\Omega) < \varepsilon$ such that the restriction of $u$ to $\Omega\setminus B$ is continuous. It is well known that every  function in $W^{1,p}(\Omega)$ has a $\text{cap}_p$-quasi continuous representative, whose values are defined $\text{cap}_p$-quasi everywhere in $\Omega$, that is, up to a subset of $\Omega$ of zero $p$-capacity. When we are dealing with the pointwise values of a function $u \in W^{1,p}(\Omega)$, for every subset $B$ of $\Omega$ we have:
\begin{align*}
\text{cap}_p (B,\Omega) = \inf \left\{\int_\Omega{|\nabla v|^p dx}: v \in W^{1,p}(\Omega) \right\},
\end{align*}
where $v=1 \ \text{cap}_p$-quasi everywhere on $B$, and $v \ge 0  \ \text{cap}_p$-quasi everywhere on $\Omega$.

\begin{definition}
\label{def:pcapuni}
By a capacity density condition on $\Omega$, we mention \emph{the $p-$capacity uniform thickness condition (with constants $r_0, c_0>0$) imposed on $\mathbb{R}^n \setminus \Omega$}. That is, there exist constants $c_0, r_0>0$ such that for all $0<t \le r_0$ and all $x \in \mathbb{R}^n \setminus \Omega$:
\begin{align}
\label{eq:capuni}
\text{cap}_p(\overline{B_t(x)} \cap \mathbb{R}^n \setminus \Omega, B_{2t}(x)) \ge c_0 \text{cap}_p(\overline{B_t(x)},B_{2t}(x)).
\end{align}
\end{definition}

Note that the domains satisfying \eqref{eq:capuni} include those with Lipschitz boundaries or even those that satisfy a uniform exterior corkscrew condition, means that there exist constants $c_0, r_0>0$ such that for all $0<t \le r_0$ and all $x \in \mathbb{R}^n \setminus \Omega$, there is $y \in B_t(x)$ such that $B_{t/c_0}(y) \subset \mathbb{R}^n\setminus\Omega$.

\subsection{Assumptions on measures}
\label{sec:assump_meas}
Firstly, we define $\mathfrak{M}_b(\Omega)$ as the space of all Radon measures on $\Omega$ with bounded total variation, $C_b^0(\Omega)$ the space of all bounded, continuous functions defined on $\Omega$, so that $\int_\Omega{\varphi d\mu}$ is is well defined for $\varphi \in C_b^0(\Omega)$ and $\mu \in \mathfrak{M}_b(\Omega)$.

The positive part, the negative part and total variation of a measure $\mu$ in $\mathfrak{M}_b(\Omega)$ are denoted by $\mu^+, \mu^-$ and $|\mu|$ - is a bounded positive measure on $\Omega$, respectively.

\begin{definition}
\label{def:convergemeas}
A sequence $\{\mu_n\}$ of measures in $\mathfrak{M}_b(\Omega)$ converges to a measure $\mu$ in $\mathfrak{M}_b(\Omega)$ in a narrow topology if:
\begin{align}
\lim_{n \to +\infty}{\int_\Omega{\varphi d \mu_n}} = \int_\Omega{\varphi d\mu},
\end{align}
\label{eq:meas1}
for every $\varphi \in C_b^0(\Omega)$.
\end{definition}

\begin{remark}
\label{rem:meas1}
If $\mu_n$ is nonnegative, then $\{\mu_n\}$ converges to $\mu$ in the narrow topology of measures if and only if $\mu_n(\Omega)$ converges to $\mu(\Omega)$ and \eqref{eq:meas1} holds for every $\varphi \in C_c^\infty(\Omega)$. In particular, if $\mu_n \ge 0$, $\{\mu_n\}$ converges to $\mu$ in the narrow topology of measures if and only if one has \eqref{eq:meas1} holds for every $\varphi \in C^\infty(\overline{\Omega})$.
\end{remark}

In addition, one defines $\mathfrak{M}_0(\Omega)$ as the set of all measures $\mu$ in $\mathfrak{M}_b(\Omega)$ which are ``absolutely continuous'' with respect to the $p$-capacity, i.e., which satisfy $\mu(B)=0$ for every Borel set $B \subseteq \Omega$ such that $\text{cap}_p(B,\Omega)=0$.

\begin{remark}
\label{rem:meas}
For every measure $\mu$ in $\mathfrak{M}_b(\Omega)$ there exists a unique pair of measures $(\mu_0, \mu_s)$, with $\mu_0$ in $\mathfrak{M}_b(\Omega)$ and $\mu_s$ in $\mathfrak{M}_s(\Omega)$, such that $\mu = \mu_0+\mu_s$, is $\mu$ is nonnegative, so are $\mu_0$ and $\mu_s$.
\end{remark}

The measures $\mu_0$ and $\mu_s$ will be called the \emph{absolutely continuous} and the \emph{singular} part of $\mu$ with respect to the $p$-capacity. 

\subsection{Assumptions on operators}
\label{sec:assump_opers}
Let the nonlinearity operator $A: \Omega \times \mathbb{R}^n \rightarrow \mathbb{R}^n$ be a Carath\'eodory function (that is, $A(.,\xi)$ is measurable on $\Omega$ for every $\xi$ in $\mathbb{R}^n$, and $A(x,.)$ is continuous on $\mathbb{R}^n$ for almost every $x$ in $\Omega$) which satisfies the following growth and monotonicity conditions: for some $1<p\le n$:
\begin{align}
\label{eq:A1}
\left| A(x,\xi) \right| &\le \beta |\xi|^{p-1},
\end{align}
\begin{align}
\label{eq:A2}
\langle A(x,\xi)-A(x,\eta), \xi - \eta \rangle &\ge \alpha \left( |\xi|^2 + |\eta|^2 \right)^{\frac{p-2}{2}}|\xi - \eta|^2,
\end{align}
for every $(\xi,\eta) \in \mathbb{R}^n \times \mathbb{R}^n \setminus \{(0,0)\}$ and a.e. $x \in \mathbb{R}^n$, $\alpha$ and $\beta$ are positive constants.

A consequence of \eqref{eq:A1}, and of the continuity of $A$ with respect to $\xi$, is that, for almost every $x$ in $\Omega$,
\begin{align*}
A(x,0) = 0.
\end{align*}

From above hypotheses, the map $u \mapsto -\div(A(x,\nabla u))$ is a coercive, continuous, bounded, and monotone operator defined on $W^{1,p}(\Omega)$ with values in its dual space $W^{-1,p'}(\Omega)$. Moreover, by the theory of monotone operators, for every $\mu$ in $W^{-1,p'}(\Omega)$ there exists one and only one solution $v$ of the problem
\begin{align*}
\begin{cases}
-\div(A(x,\nabla v)) &= \mu, \quad \text{on} \ \Omega, \\
v &= 0, \quad \text{on} \ \partial\Omega,
\end{cases}
\end{align*}
in the sense that:
\begin{align*}
\begin{cases}
v \in W^{1,p}_0(\Omega), \\
\displaystyle{\int_\Omega{A(x,\nabla v) \cdot \nabla \varphi dx} }= \langle \mu,\varphi \rangle, \quad \forall \varphi \in W^{1,p}_0(\Omega),
\end{cases}
\end{align*}
where $\langle .,. \rangle$ denotes the duality between $W^{-1,p'}(\Omega)$ and $W^{1,p}_0(\Omega)$. If $p>n$, the $\mathfrak{M}_b(\Omega)$ is a subset of $W^{-1,p'}(\Omega)$, so that this classical result gives the existence and uniqueness of a solution to \eqref{eq:elliptictype} for every measure $\mu$ in $\mathfrak{M}_b(\Omega)$.

\subsection{Definition of renormalized solution}

For each integer $k>0$, and for $s \in \mathbb{R}$ we firstly define the operator $T_k: \mathbb{R} \to \mathbb{R}$ as:
\begin{align}
\label{eq:Tk}
T_k (s) = \max\left\{ -k,\min\{k,s\} \right\},
\end{align}
and this belongs to $W_0^{1,p}(\Omega)$ for every $k>0$, which satisfies
\begin{align*}
-\div A(x,\nabla T_k(u)) = \mu_k
\end{align*}
in the sense of distribution in $\Omega$ for a finite measure $\mu_k$ in $\Omega$. 
\begin{definition}
\label{def:truncature}
Let $u$ be a measurable function defined on $\Omega$ which is finite almost everywhere, and satisfies $T_k(u) \in W^{1,1}_0(\Omega)$ for every $k>0$. Then, there exists a unique measurable function $v: \Omega \to \mathbb{R}^n$ such that:
\begin{align}
\nabla T_k(u) = \chi_{\{|u| \le k\}} v , \quad \text{almost everywhere in} \ \ \Omega, \quad \text{for  every} \ k>0.
\end{align}
Moreover, the function $v$ is so-called `` distributional gradient $\nabla u$'' of $u$.
\end{definition}

Let us recall the Remark \ref{rem:meas}, for every measure $\mu$ in $\mathfrak{M}_b(\Omega)$ can be written in a unique way as $\mu = \mu_0+\mu_s$, where $\mu_0$ in $\mathfrak{M}_0(\Omega)$ and $\mu_s$ in $\mathfrak{M}_s(\Omega)$.

The following Definition \ref{def:renormsol3} of renormalized solution to equation \eqref{eq:elliptictype} was introduced in \cite{11DMOP}, and we reproduce them herein as:

\begin{definition}
\label{def:renormsol3}
Let $\mu = \mu_0+\mu_s \in \mathfrak{M}_b(\Omega)$, where $\mu_0 \in \mathfrak{M}_0(\Omega)$ and $\mu_s \in \mathfrak{M}_s(\Omega)$. A measurable function $u$ defined in $\Omega$ and finite almost everywhere is called a renormalized solution of \eqref{eq:elliptictype} if $T_k(u) \in W^{1,p}_0(\Omega)$ for any $k>0$, $|{\nabla u}|^{p-1}\in L^r(\Omega)$ for any $0<r<\frac{n}{n-1}$, and $u$ has the following additional property. For any $k>0$ there exist  nonnegative Radon measures $\lambda_k^+, \lambda_k^- \in\mathfrak{M}_0(\Omega)$ concentrated on the sets $u=k$ and $u=-k$, respectively, such that $\mu_k^+\rightarrow\mu_s^+$, $\mu_k^-\rightarrow\mu_s^-$ in the narrow topology of measures and  that
 \begin{align*}
 \int_{\{|u|<k\}}\langle A(x,\nabla u),\nabla \varphi\rangle
  	dx=\int_{\{|u|<k\}}{\varphi d}{\mu_{0}}+\int_{\Omega}\varphi d\lambda_{k}%
  	^{+}-\int_{\Omega}\varphi d\lambda_{k}^{-},
 \end{align*}
  	for every $\varphi\in W^{1,p}_0(\Omega)\cap L^{\infty}(\Omega)$.
\end{definition}


It is known that if $\mu\in \mathfrak{M}_0(\Omega)$ then there is one and only one  renormalized solution of \eqref{eq:elliptictype} (see \cite{11DMOP}). However, for a general $\mu\in \mathfrak{M}_b(\Omega)$ the uniqueness of renormalized solutions of \eqref{eq:elliptictype} is still an open problem. 

The following Remark was given in \cite[Theorem 4.1]{11DMOP} provides the gradient estimate for solution $u$:
\begin{remark}
	\label{rem:nablau}
	Let $\Omega$ is an open bounded domain in $\mathbb{R}^n$. Then, there exists $C=C(n,p)$ such that for any the renormalized solution $u$ to \eqref{eq:elliptictype} with a given finite measure data $\mu$ there holds:
	\begin{align}
	\label{eq:nablau}
	\|\nabla u\|_{L^{\frac{(p-1)n}{n-1},\infty}(\Omega)}\leq C\left[|\mu|(\Omega)\right]^{\frac{1}{p-1}}.
	\end{align}
\end{remark}

\begin{proposition}
	\label{prop:nablauconverge}
Let $\mu \in L^1(\Omega)$ and a sequence $(u_k)_k$ be the renormalized solution of \eqref{eq:elliptictype} with data $\mu_k \in L^{\frac{p}{p-1}}(\Omega)$ such that $\mu_k \to \mu$ weakly in $L^1(\Omega)$. Then, there exists a subsequence $\{u_{k'}\}_{k'}$ which converges to $u$ in $L^s(\Omega)$ the renormalized solution to \eqref{eq:elliptictype} with measure data $\mu$, for all $s < \frac{(p-1)n}{n-p}$. Moreover, $\nabla u_{k'} \to \nabla u$ in $L^q(\Omega)$ for all $q<\frac{(p-1)n}{n-1}$.
\end{proposition}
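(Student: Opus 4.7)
The plan is to view this as an instance of the stability theorem for renormalized solutions of \cite{11DMOP}, combined with the a priori estimate of Remark \ref{rem:nablau}. First, since $\mu_k \to \mu$ weakly in $L^1(\Omega)$, the sequence $\{\|\mu_k\|_{L^1}\}$ is uniformly bounded, so by Remark \ref{rem:nablau} the gradients $\{\nabla u_k\}$ are uniformly bounded in the Marcinkiewicz space $L^{(p-1)n/(n-1),\infty}(\Omega)$. By the Sobolev embedding applied to $T_h(u_k)$ and a standard truncation argument, $\{u_k\}$ is also uniformly bounded in $L^{(p-1)n/(n-p),\infty}(\Omega)$. In particular, along a subsequence (still denoted $u_{k'}$), $u_{k'} \rightharpoonup u$ weakly in $L^s$ for every $s<(p-1)n/(n-p)$ and $\nabla u_{k'} \rightharpoonup \nabla u$ weakly in $L^q$ for every $q<(p-1)n/(n-1)$; by Rellich--Kondrachov applied to truncations, $T_h(u_{k'}) \to T_h(u)$ a.e. and strongly in $L^p(\Omega)$ for each $h>0$, whence $u_{k'} \to u$ a.e. in $\Omega$.

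The core step is to upgrade weak convergence of gradients to almost everywhere convergence. For this I would follow the Boccardo--Murat technique adapted to renormalized solutions: test the difference of the equations solved by $u_{k'}$ and $u_{k''}$ against $T_h(T_m(u_{k'})-T_m(u_{k''}))$ for fixed $h,m>0$, and exploit the monotonicity condition \eqref{eq:A2} to deduce that $\{\nabla T_m(u_{k'})\}$ is Cauchy in measure. Because $p\le 2-\tfrac{1}{n}<2$, I would pair \eqref{eq:A2} with H\"older's inequality in the form
\begin{equation*}
\int_E |\nabla T_m(u_{k'})-\nabla T_m(u_{k''})|^p\,dx \le C\,\Bigl(\int_E \langle A(x,\nabla T_m u_{k'})-A(x,\nabla T_m u_{k''}),\nabla T_m u_{k'}-\nabla T_m u_{k''}\rangle\,dx\Bigr)^{p/2},
\end{equation*}
after multiplying by the appropriate power of $(|\nabla T_m u_{k'}|^2+|\nabla T_m u_{k''}|^2)^{(2-p)/2}$, which is controlled in $L^1$ by the uniform gradient bound. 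From a.e. convergence of $\nabla u_{k'}$ and uniform boundedness in the Marcinkiewicz space $L^{(p-1)n/(n-1),\infty}$, Vitali's theorem yields $\nabla u_{k'}\to \nabla u$ strongly in $L^q(\Omega)$ for every $q<(p-1)n/(n-1)$; analogously $u_{k'}\to u$ strongly in $L^s(\Omega)$ for every $s<(p-1)n/(n-p)$.

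It remains to identify the limit $u$ as the renormalized solution associated with $\mu$. The weak $L^1$-convergence $\mu_{k'}\to\mu$ gives narrow convergence of the associated measures (note that $\mu$ and all $\mu_k$ are absolutely continuous, so the singular parts $\mu_{s,k}$ are zero). Invoking the stability result \cite[Theorem 3.4]{11DMOP}, the measures $\lambda_{k}^{\pm}$ produced by Definition \ref{def:renormsol3} for $u_{k'}$ converge in the narrow topology to the corresponding measures for $u$, and one may pass to the limit in the equation
\begin{equation*}
\int_{\{|u_{k'}|<k\}}\langle A(x,\nabla u_{k'}),\nabla \varphi\rangle\,dx=\int_{\{|u_{k'}|<k\}}\varphi\,d\mu_{k'}+\int_\Omega \varphi\, d\lambda_{k,k'}^+-\int_\Omega \varphi\, d\lambda_{k,k'}^-
\end{equation*}
to recover Definition \ref{def:renormsol3} for $u$ with datum $\mu$. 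The main obstacle is the strong convergence of gradients in the singular regime $p<2$, where the monotonicity \eqref{eq:A2} is degenerate; handling the weight $(|\nabla u_{k'}|^2+|\nabla u_{k''}|^2)^{(p-2)/2}$ via H\"older as above, together with the uniform Marcinkiewicz bound on the gradients, is the key technical point.
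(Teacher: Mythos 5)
The paper does not actually prove Proposition \ref{prop:nablauconverge}: it is stated without proof and is implicitly treated as a known consequence of the stability theory for renormalized solutions in \cite{11DMOP}. Your proposal is essentially a correct reconstruction of that standard argument, and I see no wrong step in it. Two remarks. First, you can simplify the core step considerably by observing that $\mu_k\in L^{p/(p-1)}(\Omega)\subset W^{-1,p'}(\Omega)$, so each $u_k$ is in fact the unique variational solution in $W^{1,p}_0(\Omega)$; this lets you run the Boccardo--Murat argument by testing directly with $T_h(u_{k'}-u_{k''})$ (an admissible test function in $W^{1,p}_0\cap L^\infty$) without worrying about the extra measures $\lambda_k^{\pm}$ in the renormalized formulation of the approximating problems. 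Second, your displayed H\"older step should be stated with the pair of exponents $2/p$ and $2/(2-p)$, producing the factor $\bigl(\int_E(|\nabla T_m u_{k'}|^2+|\nabla T_m u_{k''}|^2)^{p/2}\,dx\bigr)^{(2-p)/2}$, which is bounded uniformly in $k$ by the standard estimate $\int_\Omega|\nabla T_m u_k|^p\,dx\le C\,m\,\|\mu_k\|_{L^1}$ -- it is this $L^{p/2}$-type bound on the weight, not an $L^1$ bound coming from the Marcinkiewicz estimate, that closes the argument. With those adjustments, the chain (uniform Marcinkiewicz bounds from Remark \ref{rem:nablau}, a.e.\ convergence of $u_{k'}$ and of $\nabla u_{k'}$, Vitali to upgrade to strong $L^s$ and $L^q$ convergence below the critical exponents, and identification of the limit via \cite{11DMOP} -- noting that the singular parts vanish since all data are in $L^1$) is exactly the expected proof.
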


\subsection{Other definitions and remarks}
\label{sec:othersdefs}
Let us recall the definition of the Lorentz space $L^{q,t}(\Omega)$ for $0<q<\infty$ and $0<t\le \infty$ (see in \cite{55Gra}). It is the set of all Lebesgue measurable functions $g$ on $\Omega$ such that:
\begin{align}
\label{eq:lorentz}
\|g\|_{L^{q,t}(\Omega)} = \left[ q \int_0^\infty{\left( \lambda^q| \{x \in \Omega: |g(x)|>\lambda\} \right)^{\frac{t}{q}} \frac{d\lambda}{\lambda}} \right]^{\frac{1}{t}} < +\infty,
\end{align}
as $t \neq \infty$. If $t = \infty$, the space $L^{q,t}(\Omega)$ is the usual weak $L^q$ or Marcinkiewicz space with the following quasinorm:
\begin{align}
\|g\|_{L^{q,\infty}(\Omega)} = \sup_{\lambda>0}{\lambda \left|\{x \in \Omega:|g(x)|>\lambda\}\right|^{\frac{1}{q}}},
\end{align}
where $|B|$ denotes the $n$-dimensional Lebesgue measure of a set $B \subset \mathbb{R}^n$. In \eqref{eq:lorentz}, for $t=q$, the Lorentz space $L^{q,q}(\Omega)$ is the Lebesgue space $L^q(\Omega)$. In addition, let us recall that for $1<r<q<\infty$, one has:
\begin{align}
L^q(\Omega) \subset L^{q,\infty}(\Omega) \subset L^r(\Omega).
\end{align}

In this paper, we also define the  the fractional  maximal function $\mathbf{M}_\alpha$ of each locally finite measure $\mu$ by:
\begin{align}
\label{eq:Malpha}
\mathbf{M}_\alpha(\mu)(x) = \sup_{\rho>0}{\frac{|\mu|(B_\rho(x))}{\rho^{n-\alpha}}}, \quad \forall x \in 
\mathbb{R}^n, ~0<\alpha<n.
\end{align}

For the case $\alpha=0$, the definition of $\mathbf{M}_\sigma$ becomes $\mathbf{M}_0$ is essentially the Hardy-Littlewood maximal function $\mathbf{M}$ defined for each locally integrable function $f$ in $\mathbb{R}^n$ by:
\begin{align}
\label{eq:M0}
\mathbf{M}(f)(x) = \sup_{\rho>0}{\fint_{B_{\rho}(x)}|f(y)|dy},~~ \forall x \in \mathbb{R}^n,
\end{align}
where the denotation $\displaystyle{\fint_{B_r(x)}{f(y)dy}}$ indicates the integral average of $f$ in the variable $y$ over the ball $B_r(x)$, i.e.
\begin{align*}
\fint_{B_r(x)}{f(y)dy} = \frac{1}{|B_\rho(x)|}\int_{B_r(x)}{f(y)dy}.
\end{align*}
\begin{remark}
\label{rem:boundM}
It refers to \cite{55Gra} that the operator $\mathbf{M}$ is bounded from $L^s(\mathbb{R}^n)$ to $L^{s,\infty}(\mathbb{R}^n)$, for $s \ge 1$, this means,
\begin{align}
|\mathbf{M}(g)>\lambda| \le \frac{C}{\lambda^s}\int_{\mathbb{R}^n}{|g|^s dx}.
\end{align}
\end{remark}

Our result the good-$\lambda$ type inequality and gradient estimate in Theorem \ref{theo:lambda_estimate} and \ref{theolorentz_estimate} will be proved in Section \ref{sec:main} involving both these above operators $\mathbf{M}_1$ and $\mathbf{M}$.

\section{Local interior and boundary comparison estimates}
\label{sec:lems}

In this section, we obtain certain local interior and boundary comparison
estimates that are essential to our development later. First, let us consider the interior ones. Fix a point $x_0 \in \Omega$, for $0<2R \le r_0$ ($r_0$ was given in \eqref{eq:capuni}) and $\mu \in \mathfrak{M}_b(\Omega)$, with $u\in W_{loc}^{1,p}(\Omega)$ being solution to \eqref{eq:elliptictype} and for each ball $B_{2R}=B_{2R}(x_0)\subset\subset\Omega$, we consider the unique solution $w\in W_{0}^{1,p}(B_{2R})+u$ to the  equation:
\begin{equation}
\label{111120146}\left\{ \begin{array}{rcl}
- \operatorname{div}\left( {A(x,\nabla w)} \right) &=& 0 \quad \text{in} \quad B_{2R}, \\ 
w &=& u\quad \text{on} \quad \partial B_{2R}.  
\end{array} \right.
\end{equation}

We first recall the following  version of interior Gehring's lemma applied to the function $w$ defined in \eqref{111120146}, that was actually part of \cite[Lemma 3.3]{Mi2}.

\begin{lemma} \label{111120147} 
Let $u \in W^{1,p}_{\text{loc}}(\Omega)$ and $w$ be the solution to \eqref{111120146}. Then, there exist  constants $\Theta = \Theta(n,p,\alpha, \beta)>p$ and $C = C(n,p,\alpha,\beta)>0$ such that the following estimate      
	\begin{equation}\label{111120148}
	\left(\fint_{B_{\rho/2}(y)}|\nabla w|^{\Theta} dxdt\right)^{\frac{1}{\Theta}}\leq C\left(\fint_{B_{\rho}(y)}|\nabla w|^{p-1} dx\right)^{\frac{1}{p-1}}
	\end{equation}
	holds for all  $B_{\rho}(y)\subset B_{2R}(x_0)$. 
\end{lemma}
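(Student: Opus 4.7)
The plan is to follow the classical Gehring--Meyers higher integrability scheme, adapted to the singular $p$-Laplace-type setting. First I would derive a Caccioppoli-type energy estimate for $w$ on concentric balls $B_{\rho/2}(y)\subset B_\rho(y)\subset B_{2R}(x_0)$. Choosing a standard cutoff $\eta\in C_c^\infty(B_\rho(y))$ with $\eta\equiv 1$ on $B_{\rho/2}(y)$ and $|\nabla\eta|\leq 4/\rho$, testing the weak formulation of \eqref{111120146} with $\phi=\eta^p\bigl(w-(w)_{B_\rho(y)}\bigr)$, and invoking the growth/monotonicity conditions \eqref{eq:A1}--\eqref{eq:A2} together with Young's inequality, I obtain
\[
\fint_{B_{\rho/2}(y)} |\nabla w|^p\,dx \;\leq\; \frac{C}{\rho^p}\fint_{B_\rho(y)} \bigl|w-(w)_{B_\rho(y)}\bigr|^p\,dx.
\]

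Next I would combine this with the Sobolev--Poincar\'e inequality, choosing the subunit exponent $\sigma=np/(n+p)$ (so that the Sobolev conjugate satisfies $\sigma^*=p$), producing an initial reverse H\"older inequality
\[
\left(\fint_{B_{\rho/2}(y)} |\nabla w|^p\,dx\right)^{1/p} \;\leq\; C\left(\fint_{B_\rho(y)} |\nabla w|^{\sigma}\,dx\right)^{1/\sigma},
\]
with $\sigma\in(p-1,p)$ in the singular range $\tfrac{3n-2}{2n-1}<p\leq 2-\tfrac1n$. The delicate step is then to push the right-hand exponent from $\sigma$ down to the endpoint $p-1$; a direct application of H\"older's inequality runs in the wrong direction, and one has to exploit information specific to the homogeneous singular equation \eqref{111120146}. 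For this I would appeal to the classical pointwise gradient bound coming from the $C^{1,\alpha}$ interior regularity theory of DiBenedetto, Tolksdorff, and Lewis, namely $\sup_{B_{\rho/2}(y)}|\nabla w|\leq C\bigl(\fint_{B_\rho(y)}|\nabla w|^s\,dx\bigr)^{1/s}$ valid for every $s>0$, taken at $s=p-1$, to upgrade the previous inequality into the stronger reverse H\"older inequality
\[
\left(\fint_{B_{\rho/2}(y)}|\nabla w|^p\,dx\right)^{1/p} \;\leq\; C\left(\fint_{B_\rho(y)}|\nabla w|^{p-1}\,dx\right)^{1/(p-1)}.
\]

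A final application of Gehring's lemma then promotes the left-hand exponent to some $\Theta=\Theta(n,p,\alpha,\beta)>p$, while preserving the right-hand exponent $p-1$, giving exactly \eqref{111120148} together with the stated dependence of the constant. I expect the main obstacle to be precisely the reduction of the right-hand exponent from $\sigma$ to the endpoint $p-1$: it is at this step that the singular restriction $p\leq 2-\tfrac1n$ and the monotonicity structure \eqref{eq:A2} are genuinely used, whereas the Caccioppoli estimate, the Sobolev--Poincar\'e inequality, and Gehring's lemma itself are standard ingredients.
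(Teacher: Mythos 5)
The paper gives \emph{no} proof of Lemma \ref{111120147}; it is simply recalled as a known result, \cite[Lemma 3.3]{Mi2}, so there is no in-paper argument to compare against and your proposal must be judged on its own merits. Your first two steps are correct: the Caccioppoli estimate and the Sobolev--Poincar\'e inequality with $\sigma=np/(n+p)\in(p-1,p)$ give an initial reverse H\"older inequality, and Gehring's lemma then yields some $\Theta>p$ with
\[
\Bigl(\fint_{B_{\rho/2}(y)}|\nabla w|^{\Theta}\,dx\Bigr)^{1/\Theta}\leq C\Bigl(\fint_{B_{\rho}(y)}|\nabla w|^{\sigma}\,dx\Bigr)^{1/\sigma}.
\]

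The gap is precisely at the step you flag as delicate: lowering the right-hand exponent from $\sigma$ to $p-1$. You invoke the sup-bound $\sup_{B_{\rho/2}}|\nabla w|\leq C(\fint_{B_\rho}|\nabla w|^s\,dx)^{1/s}$ from the DiBenedetto--Tolksdorff--Lewis $C^{1,\alpha}$ interior theory, but that theory requires at least Dini/H\"older continuity of $x\mapsto A(x,\cdot)$. Here $A$ is merely Carath\'eodory, i.e.\ only measurable in $x$, with the structure conditions \eqref{eq:A1}--\eqref{eq:A2}; under such hypotheses $\nabla w$ need not be locally bounded, and De Giorgi--Nash--Moser only gives H\"older continuity of $w$, not of $\nabla w$. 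So the sup-bound you rely on is not available and this step fails. There is also an internal inconsistency: if you genuinely had the sup-bound with $s=p-1$, you could take $\Theta=\infty$ directly, and the Caccioppoli/Sobolev--Poincar\'e/Gehring machinery would be superfluous.

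The correct replacement, valid for measurable-in-$x$ coefficients, is a purely measure-theoretic self-improvement: starting from the reverse H\"older inequality with right exponent $\sigma$ displayed above, a standard interpolation combined with a covering/``filling-the-hole'' iteration lowers the right-hand exponent to any $s\in(0,\sigma)$, in particular to $s=p-1$, at the cost only of an enlarged constant depending on $n,p,\alpha,\beta$. This is the route behind \cite[Lemma 3.3]{Mi2}; see also \cite{Giu} for the general self-improving mechanism for reverse H\"older inequalities. With that substitution your overall plan becomes correct.
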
 

The next lemma gives an estimate for the difference $\nabla(u-w)$, with $p$ satisfies $\frac{3n-2}{2n-1}< p\leq 2-\frac{1}{n}$. These results were proved by Q.H. Nguyen in \cite[Lemma 2.2, 2.3]{55QH4}.

\begin{lemma}
\label{lem:estimateinter}
Let $u \in W^{1,p}_{\text{loc}}(\Omega)$ and $w$ be solution to \eqref{111120146} and assume that $\frac{3n-2}{2n-1}<p\leq 2-\frac{1}{n}$. Then, there is a constant  $C = C(n,p,\alpha,\beta)>0$ such that:
	\begin{align}\nonumber
	\left(	\fint_{B_{2R}(x_0)}|\nabla (u-w)|^{\gamma_0}dx\right)^{\frac{1}{\gamma_0}}&\leq C\left[\frac{|\mu|(B_{2R}(x_0))}{R^{n-1}} \right]^{\frac{1}{p-1}}+\\
	&\qquad +	C \frac{|\mu|(B_{2R}(x_0))}{R^{n-1}} \left(	\fint_{B_{2R}(x_0)}|\nabla u|^{\gamma_0}dx\right)^{\frac{2-p}{\gamma_0}},	\label{eq:estimateinter}
	\end{align}
	for some $\frac{2-p}{2}\leq \gamma_0<\frac{(p-1)n}{n-1}\leq 1$.
\end{lemma}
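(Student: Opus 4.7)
The plan is to follow the standard comparison-estimate triptych, tailored to the singular range $p < 2$: first derive an energy identity for $u - w$ via the monotonicity inequality (A2) tested against truncations of $u-w$, then extract an $L^{\gamma_0}$ bound through a H\"older decomposition adapted to the sign of $p - 2$, and finally combine with a Marcinkiewicz-type tail estimate and optimize a free truncation parameter. Concretely, I would first subtract the equations for $u$ and $w$ to obtain the weak identity $\int_{B_{2R}}\langle A(x,\nabla u) - A(x,\nabla w),\nabla\varphi\rangle\,dx = \int_{B_{2R}}\varphi\, d\mu$, and then insert $\varphi = T_k(u-w)\in W^{1,p}_0(B_{2R})\cap L^\infty(B_{2R})$ (admissible in the sense of Definition \ref{def:renormsol3} because $u - w$ vanishes on $\partial B_{2R}$ and the defect measures $\lambda_k^\pm$ converge narrowly to $\mu_s^\pm$). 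The monotonicity condition \eqref{eq:A2} then yields
\[ \int_{\{|u-w|<k\}}\bigl(|\nabla u|^2+|\nabla w|^2\bigr)^{(p-2)/2}|\nabla(u-w)|^2\,dx \;\le\; \frac{k}{\alpha}|\mu|(B_{2R}). \]

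Next, I would exploit the pointwise identity
\[ |\nabla(u-w)|^{\gamma_0} = \bigl[(|\nabla u|^2+|\nabla w|^2)^{(p-2)/2}|\nabla(u-w)|^2\bigr]^{\gamma_0/2}\bigl(|\nabla u|^2+|\nabla w|^2\bigr)^{(2-p)\gamma_0/4} \]
and invoke H\"older with conjugate exponents $(2/\gamma_0,\,2/(2-\gamma_0))$: the first factor is controlled by the energy identity above, and the second, after a further H\"older step (legitimate since $\gamma_0 \le 1 \le p$), reduces to a power of $\fint_{B_{2R}}(|\nabla u|+|\nabla w|)^{\gamma_0}$. The lower threshold $\gamma_0 > (2-p)/2$ is precisely what keeps the nested H\"older exponents admissible, and the energy estimate for $w$ (test \eqref{111120146} against $w - u$), reinforced by Lemma \ref{111120147}, lets me replace $\fint|\nabla w|^{\gamma_0}$ by $C\fint|\nabla u|^{\gamma_0}$.

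The final step is to complement the near-diagonal bound on $\{|u-w| < k\}$ with a tail estimate on $\{|u-w|\ge k\}$. Since $u - w$ vanishes on $\partial B_{2R}$ and solves a measure-data problem on $B_{2R}$, the weak-type bound of Remark \ref{rem:nablau} applied to $u-w$ controls $|\{|\nabla(u-w)|>\lambda\}|$; together with the upper threshold $\gamma_0 < (p-1)n/(n-1)$, this dominates $\int_{\{|u-w|\ge k\}}|\nabla(u-w)|^{\gamma_0}$ by a power of $|\mu|(B_{2R})/k^{p-1}$. Optimizing over $k > 0$ and normalizing by $|B_{2R}|\sim R^n$ then produces both advertised terms: the pure potential contribution $C[|\mu|(B_{2R})/R^{n-1}]^{1/(p-1)}$ and the singular correction $C(|\mu|(B_{2R})/R^{n-1})(\fint|\nabla u|^{\gamma_0})^{(2-p)/\gamma_0}$.

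The main obstacle is the exponent bookkeeping during the $k$-optimization: a direct H\"older chain naively yields the powers $(2-p)/2$ on $\fint|\nabla u|^{\gamma_0}$ and $1/2$ on $|\mu|(B_{2R})$, neither of which matches the statement, and both corrections materialize only after the near/far splitting and a Young-type absorption argument that exploits both $\gamma_0$-thresholds simultaneously. A secondary delicate point is rigorously justifying $T_k(u-w)$ as a test function in the renormalized formulation, which relies on the narrow convergence $\lambda_k^\pm \to \mu_s^\pm$ and the density of smooth functions in $W^{1,p}_0(B_{2R})\cap L^\infty(B_{2R})$.
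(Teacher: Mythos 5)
First, note that the paper does not actually prove Lemma \ref{lem:estimateinter}: it is imported verbatim from Q.-H.~Nguyen \cite{55QH4} (Lemmas 2.2--2.3), so the comparison below is against that argument. Your opening moves are the correct and standard ones: subtracting \eqref{111120146} from \eqref{eq:elliptictype}, testing with $T_k(u-w)$, invoking \eqref{eq:A2} to get the weighted energy bound $\int_{\{|u-w|<k\}}(|\nabla u|^2+|\nabla w|^2)^{\frac{p-2}{2}}|\nabla(u-w)|^2\,dx\le k|\mu|(B_{2R})/\alpha$, and then factoring $|\nabla(u-w)|^{\gamma_0}$ pointwise and applying H\"older with the pair $(2/\gamma_0,\,2/(2-\gamma_0))$ is exactly how the singular weight is removed. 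The genuine gap is your far-region step. You propose to control $\int_{\{|u-w|\ge k\}}|\nabla(u-w)|^{\gamma_0}$ by ``the weak-type bound of Remark \ref{rem:nablau} applied to $u-w$''. That bound is not available for $u-w$: the difference is not a renormalized solution of an equation of the form \eqref{eq:elliptictype}, and the shifted operator $\xi\mapsto A(x,\xi+\nabla w(x))-A(x,\nabla w(x))$ satisfies neither \eqref{eq:A1} nor \eqref{eq:A2} with uniform constants in the singular range $p<2$ wherever $|\nabla w|$ is large. Worse, the step proves too much: if $\|\nabla(u-w)\|_{L^{\frac{(p-1)n}{n-1},\infty}(B_{2R})}\le C[|\mu|(B_{2R})]^{\frac{1}{p-1}}$ were true, then for any $\gamma_0<\frac{(p-1)n}{n-1}$ one would immediately obtain $(\fint_{B_{2R}}|\nabla(u-w)|^{\gamma_0}\,dx)^{1/\gamma_0}\le C[|\mu|(B_{2R})/R^{n-1}]^{1/(p-1)}$ with no correction term, i.e.\ the conclusion of Lemma \ref{lem:estimateinter} without its second summand --- precisely the clean estimate that fails for $p\le 2-\frac1n$ and the reason the correction term is there at all.

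What the cited proof does instead is a two-parameter level-set decomposition rather than your integral-level near/far split: for each $\lambda>0$ one includes $\{|\nabla(u-w)|>\lambda\}$ in the union of $\{|u-w|>k\}$, of $\{|\nabla u|^2+|\nabla w|^2>c\lambda^2\}$, and of the set where the weighted energy density exceeds $c\lambda^{p}$ (the last inclusion uses $p<2$). The middle set is estimated by Chebyshev against $\fint_{B_{2R}}(|\nabla u|+|\nabla w|)^{\gamma_0}\,dx$ --- this, not the H\"older factorization alone, is the true source of the correction term with its exponent $\frac{2-p}{\gamma_0}$ --- the third by the energy inequality, and the first by Sobolev embedding applied to $T_k(u-w)$ combined with the same weighted H\"older trick; this is where both thresholds $\gamma_0>\frac{2-p}{2}$ and $\gamma_0<\frac{(p-1)n}{n-1}$ are consumed. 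One then integrates $\gamma_0\lambda^{\gamma_0-1}\,d\lambda$, chooses $k=k(\lambda)$, and splits the $\lambda$-integral at a threshold which is ultimately taken to be the right-hand side of \eqref{eq:estimateinter}. Without this (or an equivalent) substitute, your optimization over $k$ cannot close, since you have no a priori integrability of $\nabla(u-w)$ above $\gamma_0$ to spend on $\{|u-w|\ge k\}$. A secondary caveat you partly acknowledge: testing the renormalized formulation directly with $T_k(u-w)$ requires justification; the safe route (the one the paper itself follows in Section \ref{sec:main}) is to argue for the approximations $u_k$ with data $\mu_k\in L^{p'}(\Omega)$ and pass to the limit via Proposition \ref{prop:nablauconverge}.
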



In addition, it remarks that throughout this paper, we have $\gamma_0>p-1$. Next, let us also recall the counterparts of Lemmas \ref{111120147} and \ref{lem:estimateinter} up to the boundary. As $\mathbb{R}^n \setminus \Omega$ is uniformly $p$-thick with constants $c_0, r_0>0$, let $x_0 \in \partial \Omega$ be a boundary point and for $0<R<r_0/10$ we set $\Omega_{10R} = \Omega_{10R}(x_0) = B_{2R}(x_0) \cap \Omega$. For $u \in W^{1,p}_0(\Omega)$ being a solution to \eqref{eq:elliptictype}, we consider the unique solution $w \in u+W^{1,p}_0(\Omega_{10R})$ to the following equation:
\begin{equation}
\label{111120146*}
\left\{ \begin{array}{rcl}
- \operatorname{div}\left( {A(x,\nabla w)} \right) &=& 0 \quad ~~~\text{in}\quad \Omega_{10R}(x_0), \\ 
w &=& u\quad \quad \text{on} \quad \partial \Omega_{10R}(x_0). 
\end{array} \right.
\end{equation}

In what follows we extend $\mu$ and $u$ by zero to $\mathbb{R}^n \setminus \Omega$ and $w$ by $u$ to $\mathbb{R}^n \setminus \Omega_{10R}$. And let us recall the following lemma \ref{111120147*}, which was stated and proved in \cite{55Ph0}.

\begin{lemma} \label{111120147*} 
Let $w$ be as in \eqref{111120146*} and $c_0$ is the constant in Definition \ref{def:pcapuni}. Then, there exist  constants $\Theta=\Theta(n,p,\alpha, \beta,c_0)>p$ and $C = C(n,p,\alpha,\beta,c_0)>0$ such that the following estimate    
	\begin{equation}\label{111120148*}
	\left(\fint_{B_{\rho/2}(y)}|\nabla w|^{\Theta} dx\right)^{\frac{1}{\Theta}}\leq C\left(\fint_{B_{3\rho}(y)}|\nabla w|^{p-1} dx\right)^{\frac{1}{p-1}}
	\end{equation} 
holds for all  $B_{3\rho}(y) \subset B_{10R}(x_0)$, $y \in B_r(x_0)$. 
\end{lemma}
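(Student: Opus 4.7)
The goal is to establish a reverse Hölder inequality for $|\nabla w|$ up to the boundary, after which Gehring's self-improvement lemma yields some higher integrability exponent $\Theta>p$, and a standard lowering of the right-hand exponent produces \eqref{111120148*}. I would split the argument into two cases depending on whether $B_{3\rho}(y)$ meets $\partial\Omega$ or not. In the interior case $B_{3\rho}(y)\subset \Omega_{10R}$, since $w$ solves the homogeneous equation there, Lemma \ref{111120147} applies directly; the essential content of the present lemma is therefore the boundary case $B_{3\rho}(y)\cap(\mathbb{R}^n\setminus\Omega)\ne\emptyset$.

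\textbf{Step 1 (Caccioppoli at the boundary).} Extend $w$ by $u$ to $\mathbb{R}^n\setminus\Omega_{10R}$ and $u$ by $0$ outside $\Omega$, so $w\in W^{1,p}(B_\rho(y))$ vanishes quasi-everywhere on the uniformly $p$-thick set $(\mathbb{R}^n\setminus\Omega)\cap\overline{B_\rho(y)}$. Pick a cutoff $\eta\in C_c^\infty(B_\rho(y))$ with $\eta\equiv 1$ on $B_{\rho/2}(y)$ and $|\nabla\eta|\le C/\rho$. Testing \eqref{111120146*} with $\varphi=w\eta^p$ (which, together with its zero extension outside $\Omega_{10R}$, is an admissible element of $W^{1,p}_0(\Omega_{10R})$ since $w$ vanishes quasi-everywhere on $B_\rho(y)\setminus\Omega$) and using the growth and monotonicity hypotheses \eqref{eq:A1}--\eqref{eq:A2} together with Young's inequality yields the Caccioppoli-type bound
$$\int_{B_{\rho/2}(y)}|\nabla w|^p\,dx\le \frac{C}{\rho^p}\int_{B_\rho(y)}|w|^p\,dx.$$

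\textbf{Step 2 (Capacity-based sub-$p$ Sobolev--Poincaré).} For $\rho\le r_0$, the uniform $p$-thickness of $\mathbb{R}^n\setminus\Omega$ together with Definition \ref{def:pcapuni} gives
$$\text{cap}_p\bigl(\{w=0\}\cap\overline{B_\rho(y)},\,B_{2\rho}(y)\bigr)\ge c\,\rho^{n-p},$$
with $c=c(c_0,n,p)$. A Maz'ya-type Sobolev--Poincaré inequality then produces some exponent $q=q(n,p,c_0)<p$ such that
$$\left(\fint_{B_\rho(y)}|w|^p\,dx\right)^{1/p}\le C\rho\left(\fint_{B_{2\rho}(y)}|\nabla w|^q\,dx\right)^{1/q}.$$
Plugging this into Step 1 yields the reverse Hölder inequality
$$\left(\fint_{B_{\rho/2}(y)}|\nabla w|^p\,dx\right)^{1/p}\le C\left(\fint_{B_{2\rho}(y)}|\nabla w|^q\,dx\right)^{1/q}.$$

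\textbf{Step 3 (Gehring and lowering the exponent).} Gehring's lemma applied to $|\nabla w|^q$ upgrades the previous estimate to
$$\left(\fint_{B_{\rho/2}(y)}|\nabla w|^\Theta\,dx\right)^{1/\Theta}\le C\left(\fint_{B_{3\rho}(y)}|\nabla w|^p\,dx\right)^{1/p}$$
for some $\Theta=\Theta(n,p,\alpha,\beta,c_0)>p$. Finally, interpolation between $L^{p-1}$ and $L^\Theta$, combined with the previous $L^\Theta$ bound and an absorption, replaces $p$ by $p-1$ on the right-hand side, producing \eqref{111120148*}. The hardest and most delicate point is Step 2: establishing a sub-$p$ Sobolev--Poincaré inequality with constants depending only on $c_0,n,p$ is where the uniform $p$-capacity thickness condition enters essentially, and it is this step that makes $\Theta$ depend on $c_0$. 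The role of the hypothesis $y\in B_r(x_0)$ together with $B_{3\rho}(y)\subset B_{10R}(x_0)$ is precisely to ensure that the enlarged ball stays in the region where the capacity density holds uniformly at the scale $\rho$.
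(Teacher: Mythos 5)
The paper does not actually prove this lemma: immediately before the statement it says ``And let us recall the following lemma \ref{111120147*}, which was stated and proved in \cite{55Ph0}.'' So the ground truth here is a citation to Phuc's Ark.\ Mat.\ (2014) paper, not an in-paper argument. Your proposal is a reasonably faithful reconstruction of the proof that lives in that reference, and the overall architecture --- interior/boundary dichotomy, Caccioppoli with the test function $w\eta^p$, a capacity-based sub-$p$ Sobolev--Poincar\'e via Maz'ya, then Gehring and a final lowering of the right-hand exponent --- is the standard and correct route.

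Two points worth tightening. First, in Step 2 you only assert the existence of a sub-$p$ exponent $q<p$ in Maz'ya's inequality; the ingredient that makes this exponent and the constant depend only on $n,p,c_0$ is Lewis's self-improvement theorem for uniformly $p$-thick sets (a uniformly $p$-thick set is uniformly $q$-thick for some $q=q(n,p,c_0)<p$). Without naming this, the step looks like an unjustified assertion; it is precisely where the hypothesis $\rho\lesssim r_0$ and the constant $c_0$ from Definition \ref{def:pcapuni} enter. Second, in Step 3 the passage from exponent $p$ to $p-1$ on the right-hand side is not literally just ``interpolation plus absorption'': since the two sides of the reverse H\"older inequality live on different balls, a direct absorption is not available. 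One needs the standard Giusti/DiBenedetto-type iteration (cover the smaller ball, interpolate $L^s$ between $L^{s'}$ and $L^\Theta$ on each small ball, apply the reverse H\"older inequality at that scale, and run an iteration lemma to absorb the $L^\Theta$ term). This is classical and does work, but it deserves to be flagged rather than folded into a single sentence. With those two clarifications, your sketch matches what \cite{55Ph0} does.
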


\begin{lemma} 
\label{111120147**} 
Let $w$ be as in \eqref{111120146*} and $c_0$ is the constant in Definition \ref{def:pcapuni}. Then, there exist  constants $\Theta=\Theta(n,p,\alpha, \beta,c_0)>p$ and $C = C(n,p,\alpha,\beta,c_0)>0$ such that we have the following estimate      
	\begin{equation}\label{111120148**}
	\left(\fint_{B_{\rho/2}(y)}|\nabla w|^{\Theta} dx\right)^{\frac{1}{\Theta}}\leq C\left(\fint_{B_{2\rho/3}(y)}|\nabla w|^{p-1} dx\right)^{\frac{1}{p-1}}
	\end{equation} 
	holds for all  $B_{\rho}(y)\subset B_{10R}(x_0)$, $y \in B_r(x_0)$. 
\end{lemma}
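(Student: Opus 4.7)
My plan is to deduce Lemma~\ref{111120147**} directly from the already-established Lemma~\ref{111120147*} by a covering argument, trading the large right-hand ball $B_{3\rho}(y)$ in~\eqref{111120148*} for the smaller one $B_{2\rho/3}(y)$ in~\eqref{111120148**} at the cost only of enlarging the multiplicative constant. The key geometric observation is that for any centre $y_j\in\overline{B_{\rho/2}(y)}$ and radius $\rho_j:=\rho/24$, one has
\begin{equation*}
B_{3\rho_j}(y_j)=B_{\rho/8}(y_j)\subset B_{5\rho/8}(y)\subset B_{2\rho/3}(y)\subset B_{\rho}(y)\subset B_{10R}(x_0),
\end{equation*}
so that the hypothesis of Lemma~\ref{111120147*} is met at every such sub-ball with the same $\Theta$ and $C$ as there.

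The first step is to apply Lemma~\ref{111120147*} at each such $y_j$, obtaining
\begin{equation*}
\left(\fint_{B_{\rho/48}(y_j)}|\nabla w|^{\Theta}\,dx\right)^{1/\Theta}\leq C\left(\fint_{B_{\rho/8}(y_j)}|\nabla w|^{p-1}\,dx\right)^{1/(p-1)},
\end{equation*}
and then to enlarge the right-hand domain of integration from $B_{\rho/8}(y_j)$ to $B_{2\rho/3}(y)$. Since the volume ratio $|B_{2\rho/3}(y)|/|B_{\rho/8}(y_j)|=(16/3)^n$ is purely dimensional, this enlargement is absorbed into $C$.

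The second step is a Vitali-type selection: cover $B_{\rho/2}(y)$ by finitely many balls $\{B_{\rho/48}(y_j)\}_{j=1}^N$ with $y_j\in\overline{B_{\rho/2}(y)}$ and $N\le N(n)$. Raising the preceding estimate to the power $\Theta$, multiplying by $|B_{\rho/48}(y_j)|$, summing over $j$, and finally dividing by $|B_{\rho/2}(y)|$ (which is comparable to $N\cdot|B_{\rho/48}|$) yields precisely~\eqref{111120148**}, with a new constant still depending only on $n,p,\alpha,\beta,c_0$.

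The argument is essentially bookkeeping, so I anticipate no real obstacle. The one minor technicality is the implicit condition $y_j\in B_r(x_0)$ needed to invoke Lemma~\ref{111120147*}: since $|y_j-x_0|\le |y-x_0|+\rho/2$, this is handled by choosing the implicit $r$ in the hypothesis of Lemma~\ref{111120147**} slightly smaller than that of Lemma~\ref{111120147*} (by at most $\rho/2\le R/2$), which is harmless in the applications to follow. All of the genuine analytic difficulty—namely, the boundary reverse-H\"older estimate invoking the $p$-capacity uniform thickness of $\mathbb{R}^n\setminus\Omega$ to control sub-balls meeting $\Omega^c$—has already been packaged inside Lemma~\ref{111120147*}.
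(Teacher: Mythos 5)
Your proposal is correct and follows essentially the same route as the paper: cover $B_{\rho/2}(y)$ by boundedly many small balls whose tripled versions stay inside $B_{2\rho/3}(y)$, apply Lemma~\ref{111120147*} on each, and sum, absorbing the dimensional volume ratios into the constant (the paper uses radius $\rho/1000$ for the covering balls where you use $\rho/48$, which is immaterial). No gaps.
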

\begin{proof}
Note that there exist $m=m(d)$ and $x_1,...,x_{m}\in B_{1/2}(0)$ such that 
\begin{align*}
B_{1/2}(0)\subset B_{\frac{1}{1000}}(x_1)\cup...\cup B_{\frac{1}{1000}}(x_m)
\end{align*}

For $\rho'>0$, let $B_{\rho'}(y) \subset B_{10R}(x_0)$ and it is easy to check that
\begin{align}
\label{eq:lem351}
B_{\rho'/2}(y)\subset B_{\frac{\rho'}{1000}}(y+\rho'x_1)\cup...\cup B_{\frac{\rho'}{1000}}(y+\rho'x_m).
\end{align}

Since $B_{\frac{6\rho'}{1000}}(y+\rho'x_i)\subset B_{\frac{2\rho'}{3}}(y), \forall i=1,2,...,m$, one can apply \eqref{111120148*}, there exist $\Theta=\Theta(n,p,\alpha, \beta,c_0)$ and $C = C(n,p,\alpha,\beta,c_0)>0$ such that
\begin{align}
\label{eq:lem352}
\left(\fint_{B_{\frac{\rho'}{1000}}(y+\rho'x_i)}|\nabla w|^{\Theta} dxdt\right)^{\frac{1}{\Theta}}\leq C\left(\fint_{B_{\frac{6\rho'}{1000}}(y+\rho'x_i)}|\nabla w|^{p-1} dx\right)^{\frac{1}{p-1}}~\forall~i=1,...,m.
\end{align}

Thus, from \eqref{eq:lem351} and \eqref{eq:lem352} one obtains:
\begin{align*}
\left(\fint_{B_{\frac{\rho'}{2}}(y)}|\nabla w|^{\Theta} dxdt\right)^{\frac{1}{\Theta}}&\leq C\sum_{i=1}^{m} \left(\fint_{B_{\frac{\rho'}{1000}}(y+\rho'x_i)}|\nabla w|^{\Theta} dxdt\right)^{\frac{1}{\Theta}}\\& \leq  C\sum_{i=1}^{m} \left(\fint_{B_{\frac{6\rho'}{1000}}(y+\rho'x_i)}|\nabla w|^{p-1} dxdt\right)^{\frac{1}{p-1}}
\\& \leq  C \left(\fint_{B_{\frac{2\rho'}{3}}(y)}|\nabla w|^{p-1} dxdt\right)^{\frac{1}{p-1}},
 \end{align*}
and the proof is complete.
\end{proof}\\\\
More general, we also obtain the following lemma.
\begin{lemma}
\label{lem:addon12}
Let $w$ be as in \eqref{111120146*} and $c_0$ is the constant in Definition \ref{def:pcapuni}. Then, for $0<\theta_1<\theta_2<1$ there exist  constants $\Theta=\Theta(n,p,\alpha, \beta,c_0)>p$ and $C = C(n,p,\theta_1, \theta_2,\alpha,\beta,c_0)>0$ such that we have the following estimate      
	\begin{equation}\label{111120148**}
	\left(\fint_{B_{\theta_1\rho}(y)}|\nabla w|^{\Theta} dx\right)^{\frac{1}{\Theta}}\leq C\left(\fint_{B_{\theta_2\rho}(y)}|\nabla w|^{p-1} dx\right)^{\frac{1}{p-1}}
	\end{equation} 
	holds for all  $B_\rho(y) \subset B_{10R}(x_0)$, $y \in B_r(x_0)$. 
\end{lemma}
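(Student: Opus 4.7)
The plan is to extend the covering argument already used for Lemma \ref{111120147**} (which is essentially the case $\theta_1=1/2,\theta_2=2/3$) to arbitrary $0<\theta_1<\theta_2<1$ by rescaling the size of the covering balls.

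First I would fix a small parameter $\delta = \delta(\theta_1,\theta_2)>0$, concretely $\delta = (\theta_2-\theta_1)/12$, and cover the Euclidean ball $B_{\theta_1}(0)$ by finitely many balls $B_\delta(x_1),\ldots,B_\delta(x_m)$ with $x_i\in B_{\theta_1}(0)$, where $m = m(n,\theta_1,\theta_2)$. Translating and dilating gives
\begin{equation*}
B_{\theta_1\rho}(y) \subset \bigcup_{i=1}^m B_{\delta\rho}(z_i), \qquad z_i := y+\rho x_i.
\end{equation*}

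Next, for each $i$ the choice of $\delta$ forces $B_{6\delta\rho}(z_i) \subset B_{(\theta_1+6\delta)\rho}(y) \subset B_{\theta_2\rho}(y) \subset B_\rho(y) \subset B_{10R}(x_0)$, so Lemma \ref{111120147*} applies on the ball of radius $2\delta\rho$ centered at $z_i$ (taking $\rho_0 := 2\delta\rho$ as the parameter of that lemma, so that $\rho_0/2=\delta\rho$ and $3\rho_0=6\delta\rho$). This yields constants $\Theta=\Theta(n,p,\alpha,\beta,c_0)>p$ and $C_0=C_0(n,p,\alpha,\beta,c_0)>0$ with
\begin{equation*}
\left(\fint_{B_{\delta\rho}(z_i)}|\nabla w|^\Theta\,dx\right)^{1/\Theta} \le C_0 \left(\fint_{B_{6\delta\rho}(z_i)}|\nabla w|^{p-1}\,dx\right)^{1/(p-1)}.
\end{equation*}
Since $B_{6\delta\rho}(z_i)\subset B_{\theta_2\rho}(y)$ and the ratio of their volumes is $(6\delta/\theta_2)^n$, the right-hand side is bounded by $C_1\bigl(\fint_{B_{\theta_2\rho}(y)}|\nabla w|^{p-1}\,dx\bigr)^{1/(p-1)}$ for some $C_1 = C_1(n,p,\alpha,\beta,c_0,\theta_1,\theta_2)$.

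Finally, I would assemble the pieces by subadditivity: from $B_{\theta_1\rho}(y)\subset\bigcup_i B_{\delta\rho}(z_i)$ and $|B_{\delta\rho}|/|B_{\theta_1\rho}|=(\delta/\theta_1)^n$,
\begin{equation*}
\fint_{B_{\theta_1\rho}(y)}|\nabla w|^\Theta\,dx \le \sum_{i=1}^m \left(\frac{\delta}{\theta_1}\right)^n \fint_{B_{\delta\rho}(z_i)}|\nabla w|^\Theta\,dx,
\end{equation*}
and raising to the power $1/\Theta$ while inserting the pointwise-in-$i$ bound above gives \eqref{111120148**} with $C=C(n,p,\alpha,\beta,c_0,\theta_1,\theta_2)$. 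There is no real obstacle here: the argument is essentially the one already carried out in Lemma \ref{111120147**}, and the only care needed is bookkeeping, namely ensuring that the enlarged balls $B_{6\delta\rho}(z_i)$ that appear on the right-hand side of Lemma \ref{111120147*} still sit inside $B_{\theta_2\rho}(y)$, which is precisely what dictates the choice $\delta \le (\theta_2-\theta_1)/6$.
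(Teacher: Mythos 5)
Your argument is correct and is essentially the covering-and-rescaling argument the paper itself uses to prove Lemma \ref{111120147**} (the case $\theta_1=1/2$, $\theta_2=2/3$); the paper states Lemma \ref{lem:addon12} without proof as the ``more general'' version, and your choice $\delta\le(\theta_2-\theta_1)/6$ is exactly the bookkeeping needed to make that generalization work, since it keeps the enlarged balls $B_{6\delta\rho}(z_i)$ inside $B_{\theta_2\rho}(y)$. The only cosmetic quibble is that the volume ratio controlling the right-hand side averages is $(\theta_2/(6\delta))^{n}$ rather than its reciprocal, but this only affects the value of the constant $C_1$, not its dependence.
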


\begin{lemma}
\label{lem:estimatebound}
Let $u \in W^{1,p}_{\text{loc}}(\Omega)$ and $w$ be solution to \eqref{111120146*}. Assume that $\frac{3n-2}{2n-1}<p\leq 2-\frac{1}{n}$. Then, there is a constant $C = C(n,p,\alpha,\beta,c_0)>0$ such that:
	\begin{align}
	\begin{split}
	\left(	\fint_{B_{10R}(x_0)}|\nabla (u-w)|^{\gamma_0}dx\right)^{\frac{1}{\gamma_0}}&\leq C\left[ \frac{|\mu|(B_{10R}(x_0))}{R^{n-1}}  \right]^{\frac{1}{p-1}}+\\
	&\qquad +	C\frac{|\mu|(B_{10R}(x_0))}{R^{n-1}} \left(	\fint_{B_{10R}(x_0)}|\nabla u|^{\gamma_0}dx\right)^{\frac{2-p}{\gamma_0}},	\label{eq:estimateinter'}
	\end{split}
	\end{align}
	for some $\frac{2-p}{2}\leq \gamma_0<\frac{(p-1)n}{n-1}\leq 1$.
	\end{lemma}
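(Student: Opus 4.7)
The plan is to adapt the proof of the interior analog Lemma \ref{lem:estimateinter} from \cite{55QH4} to the boundary setting, with the ball $B_{2R}(x_0)$ replaced by $\Omega_{10R}(x_0)$ throughout and with the interior Gehring estimate Lemma \ref{111120147} replaced by its boundary counterpart Lemma \ref{111120147*} (or the strengthened Lemma \ref{lem:addon12}). The only structural novelty compared with the interior proof is the need to accommodate the lack of boundary regularity of $\Omega$, which is handled by the $p$-capacity uniform thickness hypothesis.

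First I would set $z := u - w$ and extend it by zero outside $\Omega_{10R}(x_0)$. Because $w = u$ on $\partial\Omega_{10R}(x_0)$ in the trace sense and $u$ vanishes on $\partial\Omega$, combined with the $p$-thickness of $\mathbb{R}^n\setminus\Omega$ which makes the trace well-defined there, the truncation $T_\eta(z)$ lies in $W^{1,p}_0(\Omega_{10R}(x_0))$ for every $\eta>0$. Subtracting the weak form of \eqref{111120146*} from the renormalized formulation of \eqref{eq:elliptictype} and testing against $T_\eta(z)$, then invoking the monotonicity inequality \eqref{eq:A2}, gives
\begin{equation*}
\int_{\{|z|<\eta\}}(|\nabla u|^2+|\nabla w|^2)^{(p-2)/2}|\nabla z|^2\,dx \ \le\ \frac{C\eta}{\alpha}\,|\mu|(B_{10R}(x_0)).
\end{equation*}
Next, the standard level-set argument from the proof of \cite[Theorem 4.1]{11DMOP}, which uses only this truncation inequality together with the structure conditions \eqref{eq:A1}--\eqref{eq:A2} (and \emph{not} that $z$ itself solves the original equation), yields the Marcinkiewicz-type a priori bound
\begin{equation*}
\bigl\||\nabla z|\bigr\|_{L^{(p-1)n/(n-1),\infty}(\Omega_{10R}(x_0))}\ \le\ C\bigl[|\mu|(B_{10R}(x_0))\bigr]^{1/(p-1)}.
\end{equation*}

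Finally, for $\gamma_0\in[(2-p)/2,(p-1)n/(n-1))$, I would factor
\begin{equation*}
|\nabla z|^{\gamma_0}=\bigl[(|\nabla u|+|\nabla w|)^{p-2}|\nabla z|^2\bigr]^{\gamma_0/2}\cdot(|\nabla u|+|\nabla w|)^{(2-p)\gamma_0/2},
\end{equation*}
apply Hölder's inequality with conjugate exponents $2/\gamma_0$ and $2/(2-\gamma_0)$ on $B_{10R}(x_0)$, split the domain of integration into $\{|z|<\eta\}\cup\{|z|\ge\eta\}$, and use the monotonicity bound of Step~1 on the first piece together with the Marcinkiewicz bound of Step~2 on the second. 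Optimizing in $\eta>0$ and then using the triangle inequality $|\nabla w|\le|\nabla u|+|\nabla z|$ to express the remaining $L^{(2-p)\gamma_0/(2-\gamma_0)}$ mass of $|\nabla w|$ in terms of $\nabla u$ and $\nabla z$ should produce \eqref{eq:estimateinter'}. The lower restriction $\gamma_0\ge(2-p)/2$ is precisely what keeps the Hölder exponent $(2-p)\gamma_0/(2-\gamma_0)$ within the Marcinkiewicz range of $\nabla u$, so that the scheme closes.

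The main obstacle is Step~1: justifying that $T_\eta(u-w)$ is admissible as a test function in the renormalized formulation of \eqref{eq:elliptictype} despite $\partial\Omega$ having no smoothness. This is exactly where the $p$-capacity uniform thickness condition enters, in combination with the $\operatorname{cap}_p$-quasi-continuous representative supplied by Definition \ref{def:renormsol3}, to guarantee that the zero extension of $T_\eta(z)$ still lives in $W^{1,p}_0(\Omega_{10R}(x_0))$. A secondary delicate point is the form of the level-set/Sobolev embedding argument in Step~2 in the singular regime $p<2$, where the embedding exponents degenerate compared with the superquadratic case handled in \cite{55Ph0}.
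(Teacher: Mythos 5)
The paper states this lemma bare, with no proof (only the interior analogue, Lemma \ref{lem:estimateinter}, is attributed to \cite{55QH4}), so your proposal can only be judged on its own merits. Its overall shape --- a truncation energy estimate, a weak-type bound, then an interpolation exploiting the quantity $(|\nabla u|^2+|\nabla w|^2)^{(p-2)/2}|\nabla(u-w)|^2$ --- is indeed the scheme of \cite{55QH4,55QH5}, and Step~1 is fine. Note, however, that $T_\eta(u-w)$ is admissible simply because $u-w\in W^{1,p}_0(\Omega_{10R}(x_0))$ by the very definition of $w$ in \eqref{111120146*}; the $p$-capacity thickness condition plays no role in this comparison estimate (it is needed for the boundary Gehring Lemma \ref{111120147*}), so the ``main obstacle'' you identify is not an obstacle.

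The genuine gap is Step~2. The clean bound $\|\nabla(u-w)\|_{L^{(p-1)n/(n-1),\infty}}\le C[|\mu|(B_{10R})]^{1/(p-1)}$ does \emph{not} follow from your truncation inequality by the level-set argument of \cite[Theorem 4.1]{11DMOP} when $p<2$. That argument needs $\int_{\{|z|<\eta\}}|\nabla z|^p\le C\eta|\mu|$; for $p\ge 2$ this is implied by the degenerate quantity you control, but for $p<2$ the inequality $(|\nabla u|^2+|\nabla w|^2)^{(p-2)/2}|\nabla z|^2\le|\nabla z|^p$ goes the wrong way exactly where $|\nabla u|$ is large. In fact the bound claimed in Step~2 is false in this regime (already for the $p$-Laplacian: if $|\nabla w|\sim M$ is large, a datum of size $\delta$ produces $|\nabla z|\sim\delta M^{2-p}\gg\delta^{1/(p-1)}$), and if it were true the second term of \eqref{eq:estimateinter'} would be superfluous, since $\gamma_0<\frac{(p-1)n}{n-1}$ would yield the conclusion by H\"older alone. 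Your Step~3 then invokes this unavailable bound on $\{|z|\ge\eta\}$, so the scheme does not close as written. The repair, as in \cite{55QH5}, is to work with the distribution function of $|\nabla z|$: split $\{|\nabla z|>\lambda\}$ into $\{|z|>\eta\}$ (controlled by the Sobolev--Marcinkiewicz bound on the \emph{function} $z$, which is legitimate), $\{|\nabla u|>c\lambda\}$ (controlled by $\lambda^{-\gamma_0}\int|\nabla u|^{\gamma_0}$ --- this is where the second term of \eqref{eq:estimateinter'} actually originates), and the remaining set, on which $(|\nabla u|^2+|\nabla w|^2)^{(p-2)/2}|\nabla z|^2\gtrsim\lambda^{p}$ so the energy estimate applies; then optimize in $\eta$ and integrate in $\lambda$. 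Finally, your explanation of the restriction $\gamma_0\ge\frac{2-p}{2}$ is off: $\frac{(2-p)\gamma_0}{2-\gamma_0}\le\gamma_0$ holds automatically because $\gamma_0<p$, so that exponent is always within the admissible range; the lower bound on $\gamma_0$ is instead needed in the exponent bookkeeping of the optimization in $\eta$.
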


\section{Good-$\lambda$ type bound and Gradient Lorentz Estimate}
\label{sec:main}
In this section, we state the main result of this paper. Let us give a proof of Theorem \ref{theo:lambda_estimate} on solution to \eqref{eq:elliptictype}. Let us recall here that the domain $\Omega$ is assumed to satisfy the $p$-capacity uniform thickness condition with constants $c_0>0, r_0>0$ as in Definition \ref{def:pcapuni}. The following Lemma is important and used to prove the main Theorem, it can be viewed as a substitution for the Calder\'on-Zygmund-Krylov-Safonov decomposition. 
\begin{lemma}
\label{lem:mainlem}
Let $0<\varepsilon<1, R \ge R_1>0$ and the ball $Q:=B_R(x_0)$ for some $x_0\in \mathbb{R}^n$.  Let $E\subset F\subset Q$ be two measurable sets in $\mathbb{R}^{n+1}$ with $|E|<\varepsilon |B_{R_1}|$ and satisfying the following property: for all $x\in Q$ and $r\in (0,R_1]$, we have $B_r(x)\cap Q\subset F$  	provided $|E\cap B_r(x)|\geq \varepsilon |B_r(x)|$.	Then $|E|\leq B\varepsilon |F|$ for some $B=B(n)$.
\end{lemma}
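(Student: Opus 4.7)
The plan is to execute a Vitali-type covering argument in which the factor $\varepsilon$ arises from comparing the density of $E$ at two different scales.

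First, for almost every $x \in E$ (every Lebesgue density point of $\chi_E$) I would introduce the stopping radius
\[
r_x := \sup\bigl\{r \in (0, R_1] : |E \cap B_r(x)| \ge \varepsilon |B_r(x)|\bigr\}.
\]
At density points the ratio $|E \cap B_r(x)|/|B_r(x)|$ tends to $1$ as $r \to 0^+$, so this set is nonempty, while the hypothesis $|E| < \varepsilon |B_{R_1}|$ gives
\[
|E \cap B_{R_1}(x)| \le |E| < \varepsilon |B_{R_1}(x)|,
\]
hence $r_x < R_1$. By continuity of $r \mapsto |E \cap B_r(x)|$ one has $|E \cap B_{r_x}(x)| = \varepsilon |B_{r_x}(x)|$, and $|E \cap B_r(x)| < \varepsilon |B_r(x)|$ for every $r \in (r_x, R_1]$. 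In particular, the assumption of the lemma forces $B_{r_x}(x) \cap Q \subset F$.

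Next I would apply the Vitali $5r$-covering lemma to the family $\{B_{r_x}(x)\}_x$, whose radii are uniformly bounded by $R_1$. This yields a countable disjoint subfamily $\{B_{r_i}(x_i)\}_i$ with $E \subset \bigcup_i B_{5 r_i}(x_i)$ up to a null set. I would then bound $|E \cap B_{5 r_i}(x_i)|$ by splitting into two cases. If $5 r_i \le R_1$, the supremum property of $r_{x_i}$ gives $|E \cap B_{5 r_i}(x_i)| < \varepsilon |B_{5 r_i}(x_i)|$ directly. If instead $5 r_i > R_1$, then $|B_{5 r_i}| \ge |B_{R_1}|$ and so
\[
|E \cap B_{5 r_i}(x_i)| \le |E| < \varepsilon |B_{R_1}| \le \varepsilon |B_{5 r_i}(x_i)|.
\]
Summing and using $|B_{5 r_i}| = 5^n |B_{r_i}|$ gives $|E| \le 5^n \varepsilon \sum_i |B_{r_i}(x_i)|$.

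Finally, I would transfer the last sum to $|F|$. Since each $B_{r_i}(x_i) \cap Q \subset F$ and the $B_{r_i}(x_i)$ are pairwise disjoint, the sets $B_{r_i}(x_i) \cap Q$ are pairwise disjoint subsets of $F$, hence $\sum_i |B_{r_i}(x_i) \cap Q| \le |F|$. An elementary geometric fact---inscribing a ball of radius comparable to $r$ inside $B_r(x) \cap B_R(x_0)$ along the segment from $x$ to $x_0$, valid whenever $x \in B_R(x_0)$ and $0 < r \le R$---gives $|B_{r_i}(x_i) \cap Q| \ge c_n |B_{r_i}(x_i)|$ for a dimensional $c_n > 0$. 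Chaining the two inequalities yields $|E| \le (5^n/c_n)\,\varepsilon |F|$, so $B = B(n) = 5^n/c_n$ works. The delicate point, and essentially the only one, is the coupling of the stopping-radius definition with the two-case split at $5 r_i$ versus $R_1$: this is where the strict upper bound $|E| < \varepsilon |B_{R_1}|$ is used to produce the $\varepsilon$ on the right-hand side. Without that strict inequality the large-radius case would only yield $|E| \lesssim |F|$, and the lemma would degenerate.
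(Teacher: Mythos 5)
Your argument is correct, and it fills a genuine gap: the paper states Lemma \ref{lem:mainlem} without proof (it is the standard covering substitute for the Calder\'on--Zygmund--Krylov--Safonov decomposition, going back to Wang and Byun--Wang and used verbatim in the cited works of Q.-H.~Nguyen and Phuc), and your Vitali $5r$-covering argument with the stopping radius $r_x$ is exactly the standard proof. All the steps check out: the density-point argument makes the stopping set nonempty, the hypothesis $|E|<\varepsilon|B_{R_1}|$ forces $r_x<R_1$ and controls the dilated balls in the case $5r_i>R_1$, and the inscribed-ball estimate $|B_{r}(x)\cap B_R(x_0)|\ge c_n|B_r(x)|$ for $x\in B_R(x_0)$, $r\le R$ transfers the disjoint sum to $|F|$. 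One small imprecision in your closing remark: what is essential is the hypothesis $|E|\le\varepsilon|B_{R_1}|$ itself rather than its strictness; with a non-strict bound the same two-case estimate $|E\cap B_{5r_i}(x_i)|\le\varepsilon|B_{5r_i}(x_i)|$ still goes through.
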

\begin{proof}[Proof of Theorem \ref{theo:lambda_estimate}]
Let $\gamma_0$ satisfy $\frac{2-p}{2} \le \gamma_0 < \frac{(p-1)n}{n-1}<1$ given as in Lemmas \ref{lem:estimateinter} and \ref{lem:estimatebound}. 

Let $u$ be the renormalized solution to \eqref{eq:elliptictype}. From Remark \ref{rem:nablau}, we have \eqref{eq:nablau} which implies that
\begin{align}
\label{es14}
\left(	\frac{1}{T_0^n}\int_{\Omega}|\nabla u|^{\gamma}\right)^{1/\gamma}\leq C_\gamma \left[\frac{|\mu|(\Omega)}{T_0^{n-1}}\right]^{\frac{1}{p-1}}, \qquad \text{with}\quad T_0=diam(\Omega),
\end{align}
for any $\gamma\in \left(0,\frac{(p-1)n}{n-1}\right)$.

Let $\mu_0,\lambda_k^+,\lambda_k^-$ be as in Definition \ref{def:renormsol3}. Let $u_k\in W_0^{1,p}(\Omega)$ be the unique solution to the following problem:
	\begin{equation*}
	\left\{
	\begin{array}[c]{rcl}
	-\text{div}(A(x,\nabla u_k))&=&\mu_{k} \quad \text{in } \quad\Omega,\\
	{u}_{k}&=&0\quad \ \text{on } \quad \partial\Omega,\\
	\end{array}
	\right.  
	\end{equation*}
	where $\mu_k=\chi_{|u|<k}\mu_0+\lambda_k^+-\lambda_k^-$.

For given $\varepsilon>0, \lambda>0$ and $r_0>0$, let us set 
\begin{align*}
E_{\lambda,\varepsilon}=\{({\bf M}(|\nabla u|^{\gamma_0}))^{1/\gamma_0}>\varepsilon^{-\frac{1}{\Theta}}\lambda, (\mathbf{M}_1(\mu))^{\frac{1}{p-1}}\le \varepsilon^{\frac{1}{(p-1)\gamma_0}}\lambda \}\cap Q,
\end{align*}
and 
\begin{align*}
F_\lambda=\{ ({\bf M}(|\nabla u|^{\gamma_0}))^{1/\gamma_0}> \lambda\}\cap Q,
\end{align*}
for $\lambda>0$. Here one remarks that  $Q=B_{2T_0}(x_0)$, for $x_0\in \Omega$, $T_0=diam(\Omega)$. 

Our purpose here is to prove that there exist $\Theta$ and $C$ such that \eqref{eq:mainlambda} holds. It can be rewritten as:
$$|E_{\lambda,\varepsilon}| \le C\varepsilon |F_\lambda|,$$ 
and therein Lemma \ref{lem:mainlem} has to be used, in which one needs to prove firstly that:
	 \begin{equation}\label{5hh2310131}
	 |E_{\lambda,\varepsilon}|\leq C\varepsilon |{B_{R_0}}(0)|~~\forall \lambda>0, 
	 \end{equation}
where $R_0=\min\{T_0,r_0\}$.  Indeed, we may assume that $E_{\lambda,\varepsilon}\not=\emptyset$ (if $E_{\lambda,\varepsilon}=\emptyset$, \eqref{5hh2310131} holds obiviously). Then, there is $x_1 \in Q$ such that
\begin{align*}
\left(\mathbf{M}_1(\mu)(x_1) \right)^{\frac{1}{p-1}} \le \varepsilon^{\frac{1}{(p-1)\gamma_0}}\lambda
\end{align*}
which implies
	\begin{align}
	\label{eq:bt3}
	|\mu| (\Omega)\leq T_0^{n-1}(\varepsilon\lambda)^{p-1}.
	\end{align}
	Thanks to Remark \ref{rem:boundM}, with $s=1$, $g = (\nabla u)^{\gamma_0}$ and $t=\left(\varepsilon^{-\frac{1}{\Theta}}\lambda\right)^{\gamma_0}$, in the view of \eqref{es14} with $\gamma=\gamma_0$ one has:
	 \begin{align}
	 \label{eq:bt5}
	  |E_{\lambda,\varepsilon}|&\leq \frac{C}{(\varepsilon^{-\frac{1}{\Theta}}\lambda)^{\gamma_0}}\int_{\Omega}|\nabla u|^{\gamma_0}dx \leq \frac{CT_0^n}{(\varepsilon^{-\frac{1}{\Theta}}\lambda)^{\gamma_0}} \left[\frac{|\mu|(\Omega)}{T_0^{n-1}}\right]^{\frac{\gamma_0}{p-1}}.
	 \end{align}
 In the use of \eqref{eq:bt3} we get that
\begin{align}
\begin{split}
|E_{\lambda,\varepsilon}| \leq \frac{CT_0^n}{(\varepsilon^{-\frac{1}{\Theta}}\lambda)^{\gamma_0}} \left[\frac{T_0^{n-1}(\varepsilon^{\frac{1}{(p-1)\gamma_0}}\lambda)^{p-1}}{T_0^{n-1}}\right]^{\frac{\gamma_0}{p-1}} &= CT_0^n\varepsilon^{\left(\frac{1}{\Theta}+\frac{1}{(p-1)\gamma_0}\right)\gamma_0} \\ &=C\varepsilon^{(\frac{1}{\Theta}+\frac{1}{(p-1)\gamma_0})\gamma_0}|B_{R_0}| \\ &= \frac{C}{|B_1|}\left(\frac{T_0}{R_0} \right)^n \varepsilon |B_{R_0}| 
\end{split}
\end{align}
which implies $|E_{\lambda,\varepsilon}| \le C\varepsilon|B_{R_0}|$, in which $C$ depending on $(T_0/R_0)^n$ and so, \eqref{5hh2310131} is well proved.


	 Next we verify that for all $x\in Q$, $r\in (0,2R_0]$, and $\lambda>0$ we have:
	 \begin{equation}\label{eq:bt6}
	|E_{\lambda,\varepsilon}\cap B_r(x)|\geq C\varepsilon |B_r(x)| \Longrightarrow B_r(x)\cap Q\subset F_\lambda.
	 \end{equation}
	 	
	Indeed, let $x\in Q$ and $0<r\leq 2R_0$, and by contradiction, let us assume that $B_r(x)\cap \Omega\cap F^c_\lambda\not= \emptyset$ and $E_{\lambda,\varepsilon}\cap B_r(x)\not = \emptyset$. Then, there exist $x_1,x_2\in B_r(x)\cap Q$ such that
	 \begin{align}
	 \label{eq:bt7}
	 \left[{\bf M}(|\nabla u|^{\gamma_0})(x_1)\right]^{1/\gamma_0}\leq \lambda,
\end{align}	 	 
	 and 
	 \begin{align}
	 \label{eq:bt8}
	 \mathbf{M}_1(\mu)(x_2)\le (\varepsilon^{ \frac{1}{(p-1)\gamma_0}} \lambda)^{p-1}.
	 \end{align}
	 One needs to prove that there exists a constant $C$ depending on $n,p,\alpha, \beta, \sigma,\gamma_0,c_0$ such that the following estimate holds:
	 \begin{equation}\label{5hh2310133}
	|E_{\lambda,\varepsilon}\cap B_r(x)|< C \varepsilon |B_r(x)|. 
	 \end{equation}
	 
	For $\rho>0$, firstly we have
	\begin{align*}
	\left(\fint_{B_\rho(y)}|\nabla u|^{\gamma_0}\right)^{1/\gamma_0}\leq  \sup\left\{\left(\sup_{\rho'<r}			\fint_{B_{\rho'}(y)}\chi_{B_{2r}(x)}|\nabla u|^{\gamma_0}\right)^{1/\gamma_0}, \left(\sup_{\rho'\geq r}\fint_{B_{\rho'}(y)}|\nabla u|^{\gamma_0}\right)^{1/\gamma_0}\right\}.
	\end{align*}
	
	For  $y\in B_r(x),\rho'\geq r$, one has $B_{\rho'}(y)\subset B_{\rho'+r}(x)\subset B_{\rho'+2r}(x_1)\subset B_{3\rho'}(x_1)$. Thus, by \eqref{eq:bt7}:
	\begin{align*}
	\left(\fint_{B_\rho(y)}|\nabla u|^{\gamma_0}\right)^{1/\gamma_0}&\leq  \sup\left\{\left[{\bf M}\left(\chi_{B_{2r}(x)}|\nabla u|^{\gamma_0}\right)(y)\right]^{\frac{1}{\gamma_0}}, 3^{\frac{n}{\gamma_0}} \left(\sup_{\rho'>0}\fint_{B_{\rho'}(x_1)}|\nabla u|^{\gamma_0}\right)^{1/\gamma_0}\right\}\\& \leq  \sup\left\{\left[{\bf M}\left(\chi_{B_{2r}(x)}|\nabla u|^{\gamma_0}\right)(y)\right]^{\frac{1}{\gamma_0}}, 3^{\frac{n}{\gamma_0}} \lambda\right\},
	\end{align*}
therein \eqref{eq:bt7} has been used for the last inequality.

	Take to $\text{sup}$ both sides for $\rho$, it can be seen clearly that:
	 \begin{equation*}
	\left( {\bf M}(|\nabla u|^{\gamma_0})(y)\right)^{1/\gamma_0}\leq \max\{\left[{\bf M}\left(\chi_{B_{2r}(x)}|\nabla u|^{\gamma_0}\right)(y)\right]^{\frac{1}{\gamma_0}},3^{\frac{n}{\gamma_0}}\lambda\},~\forall y\in B_r(x).
	 \end{equation*}
	 Therefore, for all $\lambda>0$ and $\varepsilon_0$ satisfies $\varepsilon_0^{-\frac{1}{\Theta}}>3^{\frac{n}{\gamma_0}}$, we will get that
	 \begin{align}
	 \label{5hh2310134}
	 \begin{split}
	 E_{\lambda,\varepsilon}\cap B_r(x) =\{{\bf M}\left(\chi_{B_{2r}(x)}|\nabla u|^{\gamma_0}\right)^{\frac{1}{\gamma_0}}>\varepsilon^{-\frac{1}{\Theta}}\lambda, (\mathbf{M}_{1}(\mu))^{\frac{1}{p-1}}\leq\varepsilon^{\frac{1}{(p-1)\gamma_0}}\lambda\} \\ ~~~~~\cap Q \cap B_r(x)
	 \end{split}
	 \end{align}
	 for all $\varepsilon \in (0,\varepsilon_0)$.
	 
	  In order to prove \eqref{5hh2310133} we separately consider for the case $B_{4r}(x)\subset\subset \Omega$ and the case $B_{4r}(x)\cap \Omega^{c}\not=\emptyset$.\\ \medskip\\
	   \textbf{Case 1: $B_{4r}(x)\subset\subset\Omega$}: Applying Lemma \ref{lem:estimateinter} for $u_k \in W^{1,p}_{loc}(\Omega)$ and $w_k$ the solution to:
	\begin{equation}
	\label{eq:wsol}
	\begin{cases}
	\div(A(x,\nabla w_k) &=0, \quad \text{in}\ \ B_{4r}(x)\\
	w_k &= u_k, \quad \text{on} \ \ \partial B_{4r}(x),
	\end{cases}
	\end{equation}
with $\mu = \mu_k$ and $B_{2R} = B_{4r}(x)$, one has a constant $C = C(n,p,\alpha,\beta)>0$ such that:
	\begin{align}
	\label{eq:btgeneral}
	\begin{split}
	\left(\fint_{B_{4r}(x)}{|\nabla u_k - \nabla w_k|^{\gamma_0}dx} \right)^{\frac{1}{\gamma_0}} &\le C\left[\frac{|\mu_k|(B_{4r}(x))}{r^{n-1}}\right]^{\frac{1}{p-1}} \\ &+ C\frac{|\mu_k|(B_{4r}(x))}{r^{n-1}} \left(\fint_{B_{4r}(x)}{|\nabla u_k|^{\gamma_0}dx} \right)^{\frac{2-p}{\gamma_0}}.
	\end{split}
	\end{align}
	Otherwise, Lemma \ref{111120147} is also applied to give:
	\begin{align}
\label{eq:bt17}
\begin{split}
	\left(\fint_{B_{2r}(x)}|\nabla w_k|^\Theta dx\right)^{\frac{1}{\Theta}}  &\leq C \left(\fint_{B_{4r}(x)}|\nabla w_k|^{p-1} dx\right)^{\frac{1}{p-1}}, \\ 
	&\le C \left(\fint_{B_{4r}(x)}{|\nabla u_k|^{\gamma_0}dx} \right)^{\frac{1}{\gamma_0}}+ \\&+ C\left( \fint_{B_{4r}(x)}{|\nabla u_k - \nabla w_k|^{\gamma_0}dx} \right)^{\frac{1}{\gamma_0}},
\end{split}
\end{align}
where, the second inequality is obtained by using Holder's inequality and for $\gamma_0>p-1$.

	For all $m \ge 2$ and $\gamma_0<1$, since one has that
	\begin{align}
	\label{eq:ineqM}
	\begin{split}
\left[	\mathbf{M}\left(\left|\sum_{i=1}^{m}f_i \right|^{\gamma_0}\right)\right]^{\frac{1}{\gamma_0}} &\leq \left[ \sum_{i=1}^{m}	\mathbf{M}\left(\left|f_i \right|^{\gamma_0}\right)\right]^{\frac{1}{\gamma_0}}\\ &\leq m^{\frac{1}{\gamma_0}-1}\sum_{i=1}^{m}\left[ 	\mathbf{M}(|f_i|^{\gamma_0})\right]^{\frac{1}{\gamma_0}},~\forall f_i \in L^{\gamma_0}(\Omega),
\end{split}
	\end{align}
	and so, apply for $m=3$ yields that:
	\begin{align}
	\label{eq:bt11}
	\begin{split}	
	|E_{\lambda,\varepsilon}\cap B_r(x)|&\leq   |\{{\bf M}\left(\chi_{B_{2r}(x)}|\nabla (u_k-w_k)|^{\gamma_0}\right)^{\frac{1}{\gamma_0}}>3^{-\frac{1}{\gamma_0}}\varepsilon^{- \frac{1}{\Theta}}\lambda\}\cap B_r(x)|+
	  \\&+ |\{{\bf M}\left(\chi_{B_{2r}(x)}|\nabla (u-u_k)|^{\gamma_0}\right)^{\frac{1}{\gamma_0}}>3^{-\frac{1}{\gamma_0}}\varepsilon^{- \frac{1}{\Theta}}\lambda\}\cap B_r(x)|+ \\&+
	  |\{{\bf M}\left(\chi_{B_{2r}(x)}|\nabla w_k|^{\gamma_0}\right)^{\frac{1}{\gamma_0}}>3^{-\frac{1}{\gamma_0}}\varepsilon^{- \frac{1}{\Theta}}\lambda\}\cap B_r(x)|.  
	  \end{split}
	\end{align}

	From Remark \ref{rem:boundM}, for each term on right hand side of \eqref{eq:bt11} one gives
	\begin{align}
	\label{eq:bt12}
	\begin{split}
	|E_{\lambda,\varepsilon}\cap B_r(x)| &\le \frac{C}{(\varepsilon^{-\frac{1}{\Theta}}\lambda)^{\gamma_0}}\left[ \int_{B_{2r}(x)}{|\nabla u_k - \nabla w_k|^{\gamma_0}dx}+ \right. \\& \left. ~~~~~~~~~~~~~~~~~~~+ \int_{B_{2r}(x)}{|\nabla u - \nabla u_k|^{\gamma_0}dx}\right]+ \\ &+\frac{C}{(\varepsilon^{-\frac{1}{\Theta}}\lambda)^{\Theta}}\int_{B_{2r}(x)}{|\nabla w_k|^{\Theta}dx}.
	\end{split}
	\end{align}

	Combining both estimates \eqref{eq:btgeneral} and \eqref{eq:bt17} to \eqref{eq:bt12} we get
	\begin{align*}
	&|E_{\lambda,\varepsilon} \cap B_r(x)| \\&\le C\varepsilon^{\gamma_0\frac{1}{\Theta}}\lambda^{-\gamma_0}r^n \left[\left( \frac{|\mu_k|(B_{4r}(x))}{r^{n-1}}\right)^{\frac{1}{p-1}}+ \right.\\&\left.~~~~~~~~~~~~~~+\frac{|\mu_k|(B_{4r}(x))}{r^{n-1}}\left(\fint_{B_{4r}(x)}{|\nabla u_k|^{\gamma_0}dx} \right)^{\frac{2-p}{\gamma_0}} \right]^{\gamma_0} +\\ &+ C\varepsilon^{\gamma_0\frac{1}{\Theta}}\lambda^{-\gamma_0}\int_{B_{4r}(x)}{|\nabla u - \nabla u_k|^{\gamma_0}dx} +\\
&+ C\varepsilon \lambda^{-\Theta}r^n\left[\left(\int_{B_{4r}(x)}{|\nabla u_k|^{\gamma_0}dx} \right)^{\frac{1}{\gamma_0}} +  \left( \frac{|\mu_k|(B_{4r}(x))}{r^{n-1}}\right)^{\frac{1}{p-1}}+ \right.\\&\left.~~~~+ \frac{|\mu_k|(B_{4r}(x))}{r^{n-1}} \left(\fint_{B_{4r}(x)}{|\nabla u_k|^{\gamma_0}dx} \right)^{\frac{2-p}{\gamma_0}} \right]^{\Theta}.
	\end{align*}
	
	Letting $k \to \infty$ one obtains:
	\begin{align*}
		&|E_{\lambda,\varepsilon} \cap B_r(x)| \\&\le C\varepsilon^{\gamma_0\frac{1}{\Theta}}\lambda^{-\gamma_0} r^n \left[\left( \frac{|\mu|(\overline{B_{4r}(x)})}{r^{n-1}}\right)^{\frac{1}{p-1}}+\right.\\&\left.~~~~~~~~~~~~~~+\frac{|\mu|(\overline{B_{4r}(x)})}{r^{n-1}}\left(\fint_{B_{4r}(x)}{|\nabla u|^{\gamma_0}dx} \right)^{\frac{2-p}{\gamma_0}} \right]^{\gamma_0} +
	\\
		&+ C \varepsilon\lambda^{-\Theta}r^n\left[\left(\int_{B_{4r}(x)}{|\nabla u|^{\gamma_0}dx} \right)^{\frac{1}{\gamma_0}} + \left( \frac{|\mu|(\overline{B_{4r}(x)})}{r^{n-1}}\right)^{\frac{1}{p-1}}+ \right.\\&\left.~~~~+ \frac{|\mu|(\overline{B_{4r}(x)})}{r^{n-1}}\left(\fint_{B_{4r}(x)}{|\nabla u|^{\gamma_0}dx} \right)^{\frac{2-p}{\gamma_0}} \right]^{\Theta}.
	\end{align*}
	As $|x-x_1|<r$, $B_{4r}(x)\subset B_{5r}(x_1)$. This gives:
		\begin{align}\label{eq:btp1}
		\begin{split}
	\fint_{B_{4r}(x)}|\nabla u|^{\gamma_0}dx&\leq  \frac{|B_5(0)|}{|B_4(0)|} 	\fint_{B_{5r}(x_1)}|\nabla u|^{\gamma_0}dx\\&\leq C\sup_{\rho>0} \fint_{B_{\rho}(x_1)}|\nabla u|^{\gamma_0}dx
	\\&= C\mathbf{M}\left(|\nabla u|^{\gamma_0}\right)(x_1).
	\end{split}
	\end{align}
	Similarly, from $|x-x_2|<r$, we can get $B_{4r}(x)\subset B_{5r}(x_2)$ and for all $\rho>0$, it finds:
	\begin{align}\label{eq:btp2}
		\begin{split}
	\frac{|\mu|(\overline{B_{4r}(x)})}{r^{n-1}} &\le \frac{|\mu|(B_{5\rho}(x_2))}{\rho^{n-1}}\leq 5^{n-1} \mathbf{M}_1(\mu)(x_2) \le (\varepsilon^{\frac{1}{(p-1)\gamma_0}}\gamma_0)^{p-1}	.\end{split}
	\end{align}
	
	Applying \eqref{eq:btp1} and \eqref{eq:btp2} together with \eqref{eq:bt7}, \eqref{eq:bt8} yields that:	
	\begin{align*}
	|E_{\lambda,\varepsilon} \cap B_r(x)| &\le C\varepsilon^{\gamma_0  \frac{1}{\Theta}+\gamma_0  \frac{1}{(p-1)\gamma_0}} r^n(1+\varepsilon^{ \frac{1}{(p-1)\gamma_0} (p-2)})^{\gamma_0}+\\
	&+ C\varepsilon r^n(1+\varepsilon^{ \frac{1}{(p-1)\gamma_0}}+\varepsilon^{ \frac{1}{(p-1)\gamma_0}(p-1)})^{\Theta}\\&\leq C\left[\varepsilon^{\gamma_0 \frac{1}{\Theta}+\gamma_0(p-1) \frac{1}{(p-1)\gamma_0}}+\varepsilon\right] r^n \\
	& \le C\varepsilon r^n.
	\end{align*} 
	which implies \eqref{5hh2310133}, herein one remarks that the constant $C$ also depends on $T_0/r_0$.\medskip\\
		 \textbf{Case 2: $B_{4r}(x) \cap \Omega^c \neq \emptyset$}:
	Let $x_3 \in \partial\Omega$ such that $|x_3-x|=\text{dist}(x,\partial\Omega)\leq 4r$. It is not difficult to check that:
	\begin{align}
	B_{4r}(x) \subset B_{10r}(x_3).
	\end{align}
	Applying Lemma \ref{lem:estimatebound} for $u_k \in W^{1,p}_{loc}(\Omega)$ and $w_k$ the solution to:
	\begin{equation}
	\label{eq:wsol10R}
	\begin{cases}
	\div(A(x,\nabla w_k) &=0, \quad \text{in}\ \ B_{10r}(x_3)\\
	w_k &= u_k, \quad \text{on} \ \ \partial B_{10r}(x_3),
	\end{cases}
	\end{equation}
for $\mu = \mu_k$ and $B_{2R} = B_{10R}(x_3)$, one has a constant $C = C(n,p,\alpha,\beta)>0$ such that:
	\begin{align}
	\label{eq:estbound1}
	\begin{split}
	\left( \fint_{B_{10r}(x_3)}{|\nabla u_k - \nabla w_k|^{\gamma_0}dx} \right)^{\frac{1}{\gamma_0}} &\le C\left[\frac{|\mu_k|(B_{10r}(x_3))}{r^{n-1}} \right]^{\frac{1}{p-1}}+ \\ &+ C\frac{|\mu_k|(B_{10r}(x_3))}{r^{n-1}} \left(\fint_{B_{10r}(x_3)}{|\nabla u_k|^{\gamma_0}dx} \right)^{\frac{2-p}{\gamma_0}},
	\end{split}
	\end{align}
	and for all $\rho>0$ satisfies $B_{\rho}(y)\subset B_{10r}(x_3)$, following Lemma \ref{111120147**} one has
	\begin{align}
	\label{eq:estbound2}
	\begin{split}
	\left(\fint_{B_{\rho/2}(y)}|\nabla w_k|^{\Theta} dx\right)^{\frac{1}{\Theta}} &\leq C_1 \left(\fint_{B_{\rho}(y)}|\nabla w_k|^{p-1} dx\right)^{\frac{1}{p-1}}, ~~\Theta>p.
\end{split}
	\end{align}
	As a version of \eqref{eq:bt12} in the ball $B_{10r}(x_3)$ one gives:
	\begin{align}
	\label{eq:es121}
	\begin{split}
|E_{\lambda,\varepsilon}\cap B_r(x)| &\le \frac{C}{(\varepsilon^{-\frac{1}{\Theta}}\lambda)^{\gamma_0}}\left[ \int_{B_{10r}(x_3)}{|\nabla u_k - \nabla w_k|^{\gamma_0}dx} + \right.\\&\left.+ \int_{B_{10r}(x_3)}{|\nabla u - \nabla u_k|^{\gamma_0}dx}\right] +\frac{C}{(\varepsilon^{-\frac{1}{\Theta}}\lambda)^{\Theta}}\int_{B_{10r}(x_3)}{|\nabla w_k|^{\Theta}dx}.
	\end{split}
	\end{align}	 
	
	Since $B_{4r}(x)\subset B_{10r}(x_3)$, similar to \eqref{eq:bt17}, we obtain:
\begin{align}\label{eq:es122}
\begin{split}
\left(\fint_{B_{2r}(x)}|\nabla w_k|^{\Theta} dx\right)^{\frac{1}{\Theta}} &\leq C \left(\fint_{B_{10r}(x_3)}|\nabla w_k|^{p-1} dx\right)^{\frac{1}{p-1}} \\ 
&\le C \left(\fint_{B_{10r}(x_3)}{|\nabla u_k|^{\gamma_0}dx} \right)^{\frac{1}{\gamma_0}}+ \\ &+ C\left( \fint_{B_{10r}(x_3)}{|\nabla u_k - \nabla w_k|^{\gamma_0}dx} \right)^{\frac{1}{\gamma_0}}.
\end{split}
\end{align}
where, the second inequality is obtained by using Holder's inequality and for $\gamma_0>p-1$.
	
	On the ball $B_{10r}(x_3)$, using the estimates \eqref{eq:estbound1} and \eqref{eq:estbound2} with \eqref{eq:es122} from above to \eqref{eq:es121}, one obtains the following estimate:
	\begin{align*}
	 &|E_{\lambda,\varepsilon} \cap B_r(x)| \\&\le C\varepsilon^{\gamma_0\frac{1}{\Theta}}\lambda^{-\gamma_0}r^n \left[\left(\frac{|\mu_k|(B_{10r}(x_3))}{r^{n-1}}\right)^{\frac{1}{p-1}} +\frac{|\mu_k|(B_{10r}(x_3))}{r^{n-1}} \left(\fint_{B_{10r}(x_3)}{|\nabla u_k|^{\gamma_0}dx} \right)^{\frac{2-p}{\gamma_0}} \right]^{\gamma_0} +
	 \\ &+ \varepsilon^{\gamma_0\frac{1}{\Theta}}\lambda^{-\gamma_0}\int_{B_{10r}(x_3)}{|\nabla u - \nabla u_k|^{\gamma_0}dx} +\\
	 &+ C \varepsilon\lambda^{-\Theta}r^n\left[\left(\fint_{B_{10r}(x_3)}{|\nabla u_k|^{\gamma_0}dx} \right)^{\frac{1}{\gamma_0}} + \left(\frac{|\mu_k|(B_{10r}(x_3))}{r^{n-1}}\right)^{\frac{1}{p-1}}+ \right.\\&\left.~~~~+ \frac{|\mu_k|(B_{10r}(x_3))}{r^{n-1}} \left(\fint_{B_{10r}(x_3)}{|\nabla u_k|^{\gamma_0}dx} \right)^{\frac{2-p}{\gamma_0}} \right]^{\Theta}.
	 \end{align*}
	By letting $k\to \infty$, it gives
	\begin{align*}
&|E_{\lambda,\varepsilon} \cap B_r(x)| \\&\le C\varepsilon^{\gamma_0\frac{1}{\Theta}}\lambda^{-\gamma_0}r^n \left[\left(\frac{|\mu|(\overline{B_{10r}(x_3)})}{r^{n-1}}\right)^{\frac{1}{p-1}} + +\frac{|\mu|(\overline{B_{10r}(x_3)})}{r^{n-1}} \left(\fint_{B_{10r}(x_3)}{|\nabla u|^{\gamma_0}dx} \right)^{\frac{2-p}{\gamma_0}} \right]^{\gamma_0} +
\\&+ C\varepsilon\lambda^{-\Theta}r^n\left[\left(\fint_{B_{10r}(x_3)}{|\nabla u|^{\gamma_0}dx} \right)^{\frac{1}{\gamma_0}} + \left(\frac{|\mu|(\overline{B_{10r}(x_3)})}{r^{n-1}}\right)^{\frac{1}{p-1}}+ \right.\\&\left.~~~~+\frac{|\mu|(\overline{B_{10r}(x_3)})}{r^{n-1}} \left(\fint_{B_{10r}(x_3)}{|\nabla u|^{\gamma_0}dx} \right)^{\frac{2-p}{\gamma_0}} \right]^{\Theta}.
	\end{align*}
	For given $x_1, x_2$ in the previous case and the definition of $x_3$, since $\text{dist}(x,\Omega) \le 4r$, we can check that these following bounds:
	\begin{align*}
	\overline{B_{10r}(x_3)} &\subset \overline{B_{14r}(x)}\subset B_{15r}(x_1)\\
	\overline{B_{10r}(x_3)} &\subset \overline{B_{14r}(x)}\subset B_{15r}(x_2)
	\end{align*}
	and the following estimates hold
	\begin{align*}
	\frac{|\mu|(\overline{B_{10r}(x_3)})}{r^{n-1}} &\le \frac{|\mu|(B_{15r}(x_2))}{r^{n-1}} \le 15^{n-1}\mathbf{M}_1(\mu)(x_2) \le (\varepsilon^{\frac{1}{(p-1)\gamma_0}}\gamma_0)^{p-1}	.
	\end{align*}
	
	On the other hand, as $|x_3-x|=\text{dist}(x,\partial\Omega)$, one obtains
	\begin{align}
	\begin{split}
	\left(\fint_{B_{10r}(x_3)}|\nabla u|^{\gamma_0}dx\right)^{\frac{1}{\gamma_0}} &\leq \left( \frac{|B_{15}(0)|}{|B_{10}(0)|} 	\fint_{B_{15r}(x_1)}|\nabla u|^{\gamma_0}dx\right)^{\frac{1}{\gamma_0}}\\&\leq C\left( \sup_{\rho>0} \fint_{B_{\rho}(x_1)}|\nabla u|^{\gamma_0}dx\right)^{\frac{1}{\gamma_0}}
	\\&= C\left(\mathbf{M}\left(|\nabla u|^{\gamma_0}\right)(x_1)\right)^{\frac{1}{\gamma_0}}.
	\end{split}
	\end{align}
	Combining these above estimates together, one finally obtains $|E_{\lambda,\varepsilon}\cap B_r(x)| \le C\varepsilon r^n$, in which the constant $C$ also depends on $T_0/r_0$.\\
	Finally, by applying Lemma \ref{lem:mainlem} for $E = E_{\lambda,\varepsilon}$, $F = F_\lambda$, we will have 
	\begin{align}
	&\left|\{({\bf M}(|\nabla u|^{\gamma_0}))^{1/\gamma_0}>\varepsilon^{-\frac{1}{\Theta}}\lambda, (\mathbf{M}_1(\mu))^{\frac{1}{p-1}}\le \varepsilon^{\frac{1}{(p-1)\gamma_0}}\lambda \}\cap Q \right| \nonumber\\ ~~~~~~~~~~~
&~~~~~~~~~~~\qquad\leq C \varepsilon \left|\{ ({\bf M}(|\nabla u|^{\gamma_0}))^{1/\gamma_0}> \lambda\}\cap Q \right|,
	\end{align}
and the proof of theorem is complete.

\end{proof}

\begin{proof}[Proof of Theorem \ref{theolorentz_estimate}]	
	Follow Theorem \ref{theo:lambda_estimate}, there exist constants $\Theta>p$, $ C>0$, $0<\varepsilon_0<1$  and a renormalized solution $u$ to equation \eqref{eq:elliptictype} with measure data $\mu$ such that for any $\varepsilon \in (0,\varepsilon_0)$, $\lambda>0$ we have:
	\begin{align}
	\label{eq:EF}
|E_{\lambda,\varepsilon}| \le C\varepsilon |F_\lambda|.
	\end{align}

	In what follows we prove the theorem \ref{theolorentz_estimate} only for the case $s \neq \infty$, and for $s=\infty$ the proof is similar.
	
	From \eqref{eq:lorentz}, for $0<s<\infty$ and $0<q<\Theta$, \eqref{eq:EF} gives:
	\begin{align*}
	\|({\bf M}(|\nabla u|^{\gamma_0}))^{1/\gamma_0}\|_{L^{q,s}(Q)}^s&=\varepsilon^{-\frac{s}{\Theta}}q\int_{0}^{\infty}\lambda^s |\{({\bf M}(|\nabla u|^{\gamma_0}))^{1/\gamma_0}>\varepsilon^{-\frac{1}{\Theta}}\lambda\}\cap Q|^{\frac{s}{q}} d\lambda\\& \leq C \varepsilon^{-\frac{s}{\Theta}+\frac{s}{q}} \int_{0}^{\infty}\lambda^s |\{ ({\bf M}(|\nabla u|^{\gamma_0}))^{1/\gamma_0}> \lambda\}\cap Q|^{\frac{s}{q}} d\lambda+\\&~~~+C\varepsilon^{-\frac{s}{\Theta}} \int_{0}^{\infty}\lambda^s |\{(\mathbf{M}_1(\mu))^{\frac{1}{p-1}}> \varepsilon^{\frac{1}{(p-1)\gamma_0}}\lambda\}\cap Q|^{\frac{s}{q}} d\lambda\\&= C \varepsilon^{s(\frac{1}{q}-\frac{1}{\Theta})}\|({\bf M}(|\nabla u|^{\gamma_0}))^{1/\gamma_0}\|_{L^{q,s}(Q)}^s+ \\&~~~+C\varepsilon^{-\frac{s}{\Theta}-\frac{s}{(p-1)\gamma_0}} \|(\mathbf{M}_1(\mu))^{\frac{1}{p-1}}\|_{L^{q,s}(Q)}^s.
	\end{align*}
	Since $s(\frac{1}{q}-\frac{1}{\Theta})>0$, one can choose $\varepsilon \in (0,\varepsilon_0)$ such that
	\begin{align*}
	C \varepsilon^{s(\frac{1}{q}-\frac{1}{\Theta})}\leq 1/2,
	\end{align*}
	and get the complete proof:
	\begin{align*}
	\|({\bf M}(|\nabla u|^{\gamma_0}))^{1/\gamma_0}\|_{L^{q,s}(Q)}\leq C\|(\mathbf{M}_1(\mu))^{\frac{1}{p-1}}\|_{L^{q,s}(Q)}.
	\end{align*}
Similarly, the result is also obtained for $s=\infty$	.
	\end{proof}

\end{document}